\documentclass{article}
\usepackage[utf8]{inputenc}
\usepackage{amsmath}
\usepackage{amsthm}
\usepackage{amsfonts}
\usepackage[english]{babel}
\usepackage{mathtools}
\usepackage{hyperref}
\usepackage{comment}
\usepackage{authblk}
\usepackage{amscd}
\usepackage{amsfonts}
\usepackage{amssymb}
\usepackage{bbm}

\hypersetup{
    colorlinks=true,
    linkcolor=blue,
    filecolor=magenta,      
    urlcolor=cyan,
    }

\newcommand{\F}{\mathbb{F}}
\newcommand{\Q}{\mathbb{Q}}

\newcommand{\N}{\mathbb{N}}
\newcommand{\Z}{\mathbb{Z}}

\newcommand{\st}{{|}}

\newcommand{\id}{{\rm id}}

\newcommand{\frp}{\mathfrak{p}}
\newcommand{\frqqq}{\mathfrak{q}}
\newcommand{\Mod}[1]{\ (\mathrm{mod}\ #1)}

\usepackage{comment}

\usepackage{amsthm}

\theoremstyle{plain}
\newtheorem{theorem}{Theorem}[section]
\newtheorem{proposition}[theorem]{Proposition}
\newtheorem{lemma}[theorem]{Lemma}
\newtheorem{corollary}[theorem]{Corollary}

\theoremstyle{definition}
\newtheorem{definition}[theorem]{Definition}

\theoremstyle{remark}

\DeclarePairedDelimiter\floor{\lfloor}{\rfloor}

\newcounter{lthm}
\theoremstyle{plain}
\newtheorem{letterthm}[lthm]{Theorem}

\begin{document}

\title{On Landweber's unique factorization problem}
\author[1]{Adam Jones}
\author[2]{Elad Paran}

\affil[1]{University of Cambridge}
\affil[2]{The Open University of Israel}

%\author{Elad Paran\thanks{The Open University of Israel}} and Adam Jones \thanks{University of Cambridge}}
%Vietnam (vongocthieu@tdtu.edu.vn)}}
%\date\today

\maketitle

\begin{abstract}
\noindent Let $A$ be a regular unique factorization domain, and let $R = A[x_1,x_2,\ldots]$ be the ring of polynomials in countably many variables over $A$. We prove that the formal power series ring $R[[t]]$ is a unique factorization domain. This resolves a question raised by Landweber in 1974. The proof is based on a finite irreducibility theorem for Krull domains: if $R$ is a Krull domain and $f\in R[[t]]$ is irreducible, then $f$ is irreducible modulo a finite power of $t$.
\end{abstract}

\section{Introduction}\label{sec:introduction}

Let $R$ be a unique factorization domain (UFD). Is the formal power series ring $R[[t]]$ again a UFD? This question has a long history. In the complete local setting, where $R=K[[x_1,\ldots,x_n]]$ is a field or a formal power series ring over a field, Lasker gave a positive answer in 1905 \cite{Lasker05}. The general problem was studied extensively by Krull in the 1930s. Krull proved \cite{Krull38} that the answer is positive when $R$ is a discrete valuation ring with an infinite residue field, but suspected that the general answer is negative. Cohen settled the finite residue field case positively in 1946 \cite{Cohen46}. The first negative example was given by Samuel in 1961 \cite[\S4]{Samuel61}. Samuel's paper led to a flurry of activity, including work of Claborn, Danilov, Buchsbaum, and Salmon \cite{Claborn65,Danilov65,Buchsbaum61, Salmon64}, surrounding the question of when $R[[t]]$ is a UFD. For example, Samuel conjectured that $R[[t]]$ is a UFD whenever $R$ is a complete local domain, but this was disproved by Salmon \cite{Salmon64}. Samuel's study of obstructions to unique factorization also led to developments beyond power series rings: for instance, it led Grothendieck to prove, in SGA~2 \cite[Expos\'e XI, Corollaire 3.14]{Grothendieck68}, that if $R$ is a Noetherian local complete-intersection domain whose localizations at all prime ideals of height at most $3$ are UFDs, then $R$ itself is a UFD. This proved another conjecture that Samuel made in \cite[\S4]{Samuel61}; see also Lipman's account \cite[p.~531]{Lipman75}.

Through the accumulation of several works by different authors, including \cite{Samuel61,Buchsbaum61,Claborn65,Danilov65,Grothendieck68,Salmon64,HartshorneOgus74,Lipman75}, we now have a relatively good understanding of unique factorization in $R[[t]]$ in the Noetherian case. In particular, Samuel and Buchsbaum independently proved \cite[Theorem 2.1]{Samuel61}, \cite[Theorem 3.2]{Buchsbaum61} that if $R$ is a regular UFD\footnote{Recall that a Noetherian UFD is called {\it regular} if all its local rings are regular local rings.}, then $R[[t]]$ is again a UFD. This includes the case of polynomial rings in finitely many variables over a field, and more generally polynomial rings in finitely many variables over any regular UFD.

The non-Noetherian case is much less understood. In particular, in \cite{Landweber74} Landweber raised the following question: Let $R$ be a regular UFD. Is the ring $R[x_1,x_2,\ldots][[t]]$ a UFD? Landweber solves a ``graded'' version of the problem for a certain subring of $R[x_1,x_2,\ldots][[t]]$, and notes the difficulty of the problem for the full ring; see \cite[Remark 3.9]{Landweber74}. Landweber notes that the answer is unknown even in the case where $R = K$ is a field. This problem was later highlighted in surveys by Anderson in 1995 \cite{Anderson95} and by Gilmer in 2006 \cite{Gilmer06Questions}. In the years since, there have been numerous works studying factorization and irreducibility in power series rings, for example \cite{PS06,BGW07,BG08,KangOh09,BGW12,BGW14,BGW14b,Elliott14,DS21,GTV25}, some of which mention Landweber's problem (e.g. in the introduction of \cite{Elliott14}), yet the problem remained unsolved.

In this paper we solve Landweber's problem in full generality. We prove:

\begin{letterthm}[Theorem \ref{regular_UFD_polynomial_theorem}]\label{main_theorem}
Let $A$ be a regular UFD, let $I$ be any set, and let $R=A[x_i\mid i\in I]$ be the ring of polynomials in $|I|$ independent variables over $A$. Then $R[[t]]$ is a UFD.
\end{letterthm}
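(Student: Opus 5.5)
The plan is to reduce to the finite-variable case, where Samuel and Buchsbaum apply, using the finite irreducibility theorem announced in the abstract. Two facts come first.

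\begin{itemize}
\item $R[[t]]$ is a Krull domain. Indeed, $R$ is a UFD, hence Krull, and power series over a Krull domain are Krull.
\item In a Krull domain, unique factorization is equivalent to every height-one prime being principal. Equivalently, every irreducible element generates a prime ideal, since atomicity is automatic in a Krull domain.
\end{itemize}

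So it suffices to show that every irreducible $f\in R[[t]]$ is prime. Suppose $f\mid gh$ in $R[[t]]$; we want $f\mid g$ or $f\mid h$.

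First I would apply the finite irreducibility theorem: since $R$ is Krull and $f$ is irreducible, there is some $N$ such that $f$ is irreducible modulo $t^N$. This is the only place where infinitely many variables must be controlled. The truncation $f \bmod t^N$ involves only finitely many coefficients, hence only finitely many variables, say $x_i$ for $i\in J$ with $J$ finite. Put $A_J=A[x_i\mid i\in J]$. This is a Noetherian UFD, and it is regular, because a polynomial ring over a regular ring is regular. By Samuel–Buchsbaum, $A_J[[t]]$ is a UFD.

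Next I transfer irreducibility to finite level. Set $f_N=\sum_{n<N}a_nt^n\in A_J[t]\subseteq A_J[[t]]$. Since $f\equiv f_N \pmod{t^N}$, the element $f_N$ is irreducible modulo $t^N$ in $R[[t]]$. I claim $f_N$ is then irreducible in $A_J[[t]]$, indeed in every $B[[t]]$ with $A_J\subseteq B\subseteq R$ a polynomial subring. A factorization $f_N=uv$ reduces modulo $t^N$. The precise sense of ``irreducible modulo $t^N$'' must be matched to the paper's definition: presumably it forbids nontrivial factorizations $f\equiv uv$ with $u,v$ nonunits in $R[[t]]$. A constant-term unit is a unit, so the factorization is trivial.

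The main obstacle is getting from $f_N$ back to $f$ itself, which has infinitely many variables among its higher coefficients. The approach I would take is this.
\begin{itemize}
\item Enlarge $J$ to a finite $J'$ covering the variables of the first $M$ coefficients of $f$, $g$ and $h$, for arbitrary $M$.
\item Use that in the UFD $A_{J'}[[t]]$, truncations of $f$ are irreducible, hence prime. So modulo $t^M$ we get $f\mid g$ or $f\mid h$ approximately, with the quotient's coefficients lying in $R$.
\item Take a compatible choice as $M\to\infty$ by a König-type or pigeonhole argument: infinitely many $M$ choose the same factor, say $g$. Then show that the approximate quotients $q_M$ with $g\equiv fq_M \pmod{t^M}$ are uniquely determined. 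This uses that $R[[t]]$ is a domain and that $f\not\equiv 0 \pmod t$ after factoring out powers of $t$.
\item Conclude that the $q_M$ form a coherent power series $q\in R[[t]]$ with $g=fq$.
\end{itemize}

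The delicate point is uniqueness of the approximate quotients when $f(0)$ is not a unit. For this I would instead work with divisibility in $\mathrm{Frac}(R)[[t]]$-style localizations, or via the height-one prime $P$ containing $f$. Set $P_J=P\cap A_J[[t]]$. One shows that for $J$ large, $P_J$ is a height-one prime generated by an irreducible element of $A_J[[t]]$ which stabilizes up to units as $J$ grows, using the mod-$t^N$ rigidity. Then $f$ generates $P=\bigcup_J P_J R[[t]]$.

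Making this stabilization rigorous is where I expect most of the work to lie.
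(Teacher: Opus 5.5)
\textbf{There is a genuine gap.} Your setup matches the paper's: reduce to ``irreducible implies prime'' in the Krull domain $R[[t]]$, use the finite irreducibility theorem to get $N$, and use Samuel--Buchsbaum on the regular UFD $A_J$. Your transfer of irreducibility to $A_J[[t]]$ is also correct, since non-units of $A_J[[t]]$ stay non-units in $R[[t]]$.

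The argument breaks at the step ``so modulo $t^M$ we get $f\mid g$ or $f\mid h$ approximately''. Truncating $gh=fq$ only gives the congruence $g_Mh_M\equiv f_Mq_M \pmod{t^M}$. You did not even put the coefficients of $q$ into $J'$, so this is not a relation inside $A_{J'}[[t]]$. More importantly, primality of $f_M$ in $A_{J'}[[t]]$ applies only to \emph{exact} divisibilities. Turning $f|_M gh$ into $f|_k g$ or $f|_k h$ with $k\to\infty$ is a quantitative (``$C$-prime'') property, and primes of a power series UFD need not have it. The remark at the end of Section~\ref{sec:quasi_primality} gives a prime of $\mathbb{Q}[X,Y][[t]]$ that is not $C$-prime for any $C$. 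Your fallback, stabilization of $P_J=P\cap A_J[[t]]$, is unproved. It also presupposes a single height-one prime $P$ attached to $f$, which is essentially what you are trying to show.

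\textbf{The missing idea is specialization instead of truncation.} Take $J'$ finite containing the variables that occur in the coefficients of degree $<M$ of $f,g,h$. Let $\rho\colon R\to A_{J'}$ be the $A$-algebra retraction with $x_i\mapsto 0$ for $i\notin J'$, applied coefficientwise to power series. Since $\rho$ is a ring homomorphism, it turns the exact identity $fq=gh$ into the exact identity $\rho(f)\rho(q)=\rho(g)\rho(h)$ in $A_{J'}[[t]]$. At the same time, $\rho(f)\equiv f$, $\rho(g)\equiv g$ and $\rho(h)\equiv h$ modulo $t^M$.

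For $M\ge N$, your own argument shows that $\rho(f)$ is irreducible in $A_{J'}[[t]]$, hence prime. So $\rho(f)$ divides $\rho(g)$ or $\rho(h)$ exactly, which gives $f|_M g$ or $f|_M h$. One of the two alternatives occurs for infinitely many $M$, hence for all $M$. Lemma~\ref{lem_a|b}, which holds over any domain with no assumption on $f_0$, then gives $f\mid g$ or $f\mid h$.

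This is the paper's argument (Theorem~\ref{finite_stage_retract_criterion}). The retraction is what makes the infinitely many variables in the tails harmless. No stabilization of height-one primes is needed, and no uniqueness of approximate quotients either; the latter is automatic anyway when $f_0\neq 0$, since $R$ is a domain.
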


Given a domain $R$, a non-unit power series $f\in R[[t]]$, and an integer $n\geq 1$, we say that $f$ is \emph{irreducible modulo $t^n$} if there do not exist non-units $g,h\in R[[t]]$ such that $f\equiv gh \Mod{t^n}$. The main ingredient in the proof of Theorem \ref{main_theorem} is the following finite irreducibility theorem:

\begin{letterthm}[Theorem \ref{finite_irreducibility_section}]\label{finite_irreducibilty_theorem}
Let $R$ be a Krull domain. Let $f$ be an irreducible power series in $R[[t]]$. There exists $n\geq 1$ such that $f$ is irreducible modulo $t^n$.
\end{letterthm}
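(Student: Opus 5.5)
The plan is to argue by contradiction, using a compactness argument that rests on finiteness of divisors in a Krull domain. Write $f=\sum_{k\ge0} a_k t^k$ and set $a=a_0$. Since $f$ is irreducible, it is not a unit, so $a$ is a non-unit of $R$.

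\emph{The case $a=0$.} Here $f=tu$ with $u$ a unit of $R[[t]]$, because $f$ is irreducible and $t\mid f$. Suppose $f\equiv gh \Mod{t^2}$. Comparing constant terms gives $g_0h_0=0$, so say $g_0=0$. Comparing linear terms then gives $g_1h_0=u_0$, which is a unit of $R$. Hence $h_0$ is a unit and $h$ is a unit of $R[[t]]$. So $f$ is irreducible modulo $t^2$, and we may assume from now on that $a\neq 0$.

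\emph{Step 1: reduce to finitely many shapes of constant terms.} Suppose $f\equiv g_nh_n \Mod{t^n}$ with non-units $g_n,h_n$ for every $n\ge1$. Then $g_n(0)h_n(0)=a$ with both factors non-units. Since $R$ is a Krull domain, only finitely many height-one primes $\frp_1,\dots,\frp_r$ contain $a$. A divisor $b$ of $a$ is determined up to a unit of $R$ by the tuple $(v_{\frp_i}(b))_i$, which is bounded by $(v_{\frp_i}(a))_i$. So there are only finitely many associate classes of $g_n(0)$. Passing to a subsequence and absorbing units into $h_n$, we may assume $g_n(0)=b$ for all $n$, where $b$ is a fixed non-unit proper divisor of $a$ with $a/b$ also a non-unit.

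\emph{Step 2: pass to a semilocal PID.} Let $S=\bigcap_{i=1}^r R_{\frp_i}$. This is a semilocal Dedekind domain with finitely many maximal ideals, hence a PID, so $S[[t]]$ is a UFD (Samuel--Buchsbaum). Moreover $R=S\cap\bigcap_{\frp\neq\frp_i}R_{\frp}$, and $a$ is a unit in every $R_\frp$ with $\frp\neq\frp_i$.

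Consider the factorization $f=\pi_1\cdots\pi_m$ of $f$ into primes of $S[[t]]$. For each subset $J\subseteq\{1,\dots,m\}$, put $G_J=\prod_{j\in J}\pi_j$. I will show that some $G_J$, suitably rescaled by a unit of $S$, lies in $R[[t]]$ with constant term associate to $b$. That would give a nontrivial factorization of $f$ in $R[[t]]$ and the desired contradiction. Descending to $R$ uses the following observation. The rescaled $G_J$ lies in $R[[t]]$ exactly when its coefficients lie in each $R_\frp$ with $\frp\neq\frp_i$. On those primes $f$ has unit constant term, so by the usual Hensel/Gauss-lemma argument in $R_\frp[[t]]$ any factor of $f$ with unit-normalized constant term has coefficients in $R_\frp$. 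Making this descent precise via divisorial contents is a secondary issue, which I expect to be manageable.

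\emph{Step 3: the approximation (main obstacle).} The hard part is to show that approximate factorizations $f\equiv g_nh_n\Mod{t^n}$ in $S[[t]]$, with $g_n(0)=b$, must for large $n$ agree modulo $t^{N}$, up to units, with one of the finitely many genuine factorizations $G_J\cdot(f/G_J)$. Here $N\to\infty$ as $n\to\infty$. The idea is to use the $(\frp_1\cdots\frp_r,t)$-adic structure of $S[[t]]$. Since $S[[t]]$ is a UFD, the ideals $(g_n)$ modulo $t^n$ contain only finitely many "divisor types". The first thing I would try is a pigeonhole/König's-lemma argument on the images of $g_n$ in $S[[t]]/(\pi_j,t^k)$. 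That argument should show that some fixed $J$ satisfies: $\pi_j\mid g_n$ up to high $t$-adic precision for all $j\in J$, and $\pi_j\mid h_n$ up to high precision for all $j\notin J$. The valuation data of $b$ then force $J\neq\emptyset$ and $J\neq\{1,\dots,m\}$. If this works, $G_J$ is a genuine nonunit factor of $f$ in $S[[t]]$ whose constant term is associate to $b$, and Step 2 descends it to $R[[t]]$. This contradicts the irreducibility of $f$, so some $n$ must make $f$ irreducible modulo $t^n$.
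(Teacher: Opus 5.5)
Your treatment of the case $f_0=0$ and your Step 1 are fine and agree with the paper. However, the two steps that carry the content are not established, and one of them is argued incorrectly.

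\textbf{Step 3 is the missing idea.} Primality of the $\pi_j$ in the UFD $S[[t]]$ gives no control over \emph{approximate} divisibility. From $\pi_j\mid_n g_nh_n$ you cannot conclude $\pi_j\mid_N g_n$ or $\pi_j\mid_N h_n$ with $N\to\infty$ without a quantitative statement. A pigeonhole/K\"onig argument on images in $S[[t]]/(\pi_j,t^k)$ cannot supply one, because these rings are infinite as soon as a residue field of $S$ is (already for $R=K[x,y]$). What is needed is the paper's $C$-prime property: $\pi\mid_n b_1b_2$ forces $\pi\mid_i b_1$ and $\pi\mid_j b_2$ with $i+j>n-C$. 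This is not formal. The remark at the end of Section~\ref{sec:quasi_primality} exhibits a prime of the UFD $\mathbb{Q}[X,Y][[t]]$ that is not $C$-prime for any $C$. The paper devotes Section~\ref{sec:quasi_primality} and Proposition~\ref{theorem_over_dvr} to establishing it over DVRs. With it in hand, the lemma preceding Lemma~\ref{local_bound} is exactly your Step 3, including the inductive peeling needed to pass from the individual $\pi_j$ to $G_J$. Over your semilocal PID you could instead argue as follows: $S[[t]]/(\pi)$ is a one-dimensional complete local domain whose normalization is a finite DVR extension with conductor containing a power of $t$, which gives $\mathrm{ord}_t(xy)\le\mathrm{ord}_t(x)+\mathrm{ord}_t(y)+C$. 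But some such argument has to be supplied, and it is the heart of the proof.

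\textbf{The descent in Step 2 is wrong as stated.} A factor of $f$ in $S[[t]]$ with unit constant term at a height-one prime $\frqqq\neq\frp_i$ need not have coefficients in $R_{\frqqq}$. For $q\in\frqqq\setminus\bigcup_i\frp_i$, the series $1+t/q$ is a unit of $S[[t]]$, hence divides $f$ there, yet $1/q\notin R_{\frqqq}$. Gauss's lemma over $R_{\frqqq}$ only applies to series with bounded $\frqqq$-adic denominators, and series over $S$ need not have them. Since the $\pi_j$ are determined only up to such units, getting \emph{both} $G_J$ and $f/G_J$ into $R[[t]]$ is a real problem, not a bookkeeping issue.

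The fix is to glue ideals rather than descend an exact factor. Two series $g,g'$ in $R[[t]]$ with the same nonzero constant term $b$ satisfy $(g,t^\ell)=(g',t^\ell)$ as soon as this holds in $R_{\frp}[[t]]$ for every height-one $\frp\ni b$ (Lemma~\ref{V_to_R}). So finiteness of the ideals $(g_n,t^\ell)S[[t]]$ for large $n$ gives finiteness of the ideals $(g_n,t^\ell)R[[t]]$. Lemma~\ref{tree} then produces the factorization as a $t$-adic limit inside $R[[t]]$.
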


In the special case where $R$ is a characteristic-zero UFD in which every integer is a unit, Theorem \ref{finite_irreducibilty_theorem} was proved by Bayart in \cite[\S IV, Th\'eor\`eme (T)]{Bayart81}; here we prove the theorem in the general context of Krull domains. 

Recall that a commutative domain is a UFD if and only if it is atomic and every irreducible element is prime \cite[Chapter VII, \S1]{BourbakiCA}. If $R$ is a Krull domain, then $R[[t]]$ is again a Krull domain \cite[Chapter VII, \S1]{BourbakiCA}, and in particular is atomic. Thus, after proving Theorem \ref{finite_irreducibilty_theorem}, it remains to prove primeness for irreducibles of finite height. We do this in the following general form:

\begin{letterthm}[Theorem \ref{finite_stage_retract_criterion}]\label{finite_stage_retract_intro}
Let $R$ be a Krull domain. Suppose that for every finite subset $E\subset R$ there exists a subring $S\subseteq R$ and a retraction $\rho:R\to S$ such that $E\subseteq S$ and $S[[t]]$ is a UFD. Then $R[[t]]$ is a UFD.
\end{letterthm}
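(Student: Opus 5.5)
Since $R$ is Krull, $R[[t]]$ is again Krull and hence atomic, so by the criterion recalled above it suffices to show that every irreducible $f\in R[[t]]$ is prime. Fix such an $f$ and suppose $f\mid gh$ with $g,h\in R[[t]]$, say $gh=fq$. We must show $f\mid g$ or $f\mid h$. The plan is to pass to a subring $S$ supplied by the hypothesis and use that $S[[t]]$ is a UFD. The catch is that $f,g,h,q$ have infinitely many coefficients, so no finite $E$ contains them all. This is where Theorem \ref{finite_irreducibilty_theorem} enters: it lets us capture the irreducibility of $f$ in finitely many coefficients.

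\textbf{Step 1 (finite irreducibility).} By Theorem \ref{finite_irreducibilty_theorem}, choose $n\ge 1$ such that $f$ is irreducible modulo $t^n$. Let $E$ be the finite set of coefficients of $f$ in degrees $<n$. Take $S\supseteq E$ and a retraction $\rho:R\to S$ as in the hypothesis. Extend $\rho$ coefficientwise to a ring retraction $\rho:R[[t]]\to S[[t]]$, which is a ring homomorphism fixing $S[[t]]$. Put $f'=\rho(f)$, so $f'\equiv f\Mod{t^n}$.

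\textbf{Step 2 ($f'$ is irreducible in $S[[t]]$).} The element $f'$ is a non-unit: otherwise its constant term $f(0)$ would be a unit in $S\subseteq R$, making $f$ a unit. If $f'=ab$ with non-units $a,b\in S[[t]]$, then $a,b$ are non-units of $R[[t]]$. Indeed, a unit of $R[[t]]$ has unit constant term $u\in R$ with $uv=1$; applying $\rho$ gives $\rho(u)\rho(v)=1$, and $\rho(u)=u$ since $u\in S$, so $u$ is a unit in $S$. Then $f\equiv ab\Mod{t^n}$ contradicts the choice of $n$. Hence $f'$ is irreducible, and since $S[[t]]$ is a UFD, $f'$ is prime in $S[[t]]$.

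\textbf{Step 3 (transfer back; the main obstacle).} Applying $\rho$ to $gh=fq$ gives $f'\mid\rho(g)\rho(h)$, so $f'$ divides $\rho(g)$ or $\rho(h)$ in $S[[t]]$. However, $f'\neq f$ in general, and we need divisibility by $f$ itself. I would fix this by exploiting the freedom to enlarge $E$. For each $m\ge n$, let $E_m$ be $E$ together with the coefficients of $f,g,h$ in degrees $<m$, and take a retraction $\rho_m:R\to S_m$. Then $f_m:=\rho_m(f)$ is prime in $S_m[[t]]$ and agrees with $f$ modulo $t^m$. For each $m$, either $f_m\mid\rho_m(g)$ or $f_m\mid\rho_m(h)$, so one alternative, say for $g$, occurs for infinitely many $m$.

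For those $m$ we have $\rho_m(g)=f_m k_m$, which gives $g\equiv f k_m\Mod{t^m}$. The quotient $k_m$ is determined modulo $t^{m-c}$, where $c$ is the $t$-adic order of $f$. Reduce first to $f(0)\ne 0$ by dividing out $t$-factors; $t$ is prime since $R[[t]]/(t)\cong R$. With $c=0$, the coefficients of $k_m$ are uniquely determined by $g$ and $f$ as long as $f(0)$ is not a zero divisor, which holds in a domain. Hence the $k_m$ are compatible and converge $t$-adically to some $k\in R[[t]]$ with $g=fk$.

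The delicate points are this compatibility-and-limit argument and the reduction to $f(0)\neq0$. If $f(0)$ is merely nonzero but not a unit, division is ambiguous only through the choice of $k_m$, and uniqueness follows from $R$ being a domain: $f(k_m-k_{m'})\equiv 0\Mod{t^{\min(m,m')}}$ forces $k_m\equiv k_{m'}$ to that order.
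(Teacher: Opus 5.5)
Your proof is correct and follows the paper's route. You use finite irreducibility of $f$ modulo $t^n$, then a retraction onto a subring $S$ containing enough low-degree coefficients, then irreducibility (hence primality) of $\rho(f)$ in the UFD $S[[t]]$ via the non-unit transfer of Lemma \ref{retraction_units}, and finally transfer of divisibility modulo $t^m$ back to $R[[t]]$; the only difference is packaging: the paper argues by contradiction with a single retraction, choosing $m$ beyond witnesses $m_g,m_h$ with $g\notin(f,t^{m_g})$, $h\notin(f,t^{m_h})$ from Lemma \ref{lem_a|b}, whereas you use a retraction $\rho_m$ for every $m$, pigeonhole on which of $g,h$ is divisible, and redo the $t$-adic limit argument of Lemma \ref{lem_a|b} directly.
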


Theorem \ref{main_theorem} follows from Theorem \ref{finite_stage_retract_intro}: for a finite set of elements of $A[x_i\mid i\in I]$, only finitely many variables occur; the corresponding finite polynomial subring over $A$ is a regular UFD, hence its power series ring is a UFD by the theorem of Samuel--Buchsbaum, and the map sending all other variables to $0$ is the required retraction.

The bulk of the paper is devoted to the proof of Theorem \ref{finite_irreducibilty_theorem}. We note that in the special case $R=\mathbb Z$, one can give a short proof; see Proposition \ref{finite_quotient}. In the special case where $R$ is a Noetherian, Henselian, excellent local domain, the theorem also follows from Artin's strong approximation theorem \cite{Artin69,PfisterPopescu1975}.

The proof of Theorem \ref{finite_irreducibilty_theorem} proceeds by contraposition. We assume that $f$ has factorizations modulo arbitrarily high powers of $t$, and then prove that sufficiently compatible partial factorizations yield a proper factorization of $f$. This is done via a K{\"o}nig's lemma based argument, see Lemma \ref{tree}. The main technical input needed to obtain compatibility is a ``primality of finite height'' property for power series. This property holds over discrete valuation rings, and is proved in \S\ref{sec:quasi_primality}. It does not hold over arbitrary Krull domains or even arbitrary UFDs, as explained in the remark at the end of \S\ref{sec:quasi_primality}. We therefore first prove Theorem \ref{finite_irreducibilty_theorem} over DVRs in \S\ref{sec:dvr_case}. In \S\ref{sec:krull_proof}, we then use a ``local-global" argument and the Krull intersection property to extend the result to all Krull domains. Finally, \S\ref{sec:finite_means_prime_proof} proves the finite-stage retraction criterion mentioned above and deduces Theorem \ref{main_theorem}.

\textbf{Acknowledgements:} The first author is grateful to the Heilbronn Institute for Mathematical Research for supporting this research. The authors thank Thieu N. Vo and Arno Fehm for helpful discussions on the topic of this paper.

\section{Preliminaries}\label{sec:preliminaries}

We begin by fixing standard terminology conventions: 
Throughout this work, by a {\it domain} we mean a commutative integral domain. If $R$ is a domain, a non-unit non-zero element $p \in R$ is called {\it prime} if $p|ab$ implies $p|a$ or $p|b$ for all $a,b \in R$. A non-zero, non-unit element $a \in R$ is called {\it irreducible} if $a$ cannot be written as a product of two non-units in $R$. Whenever we discuss a {\it unique factorization domain}, we shall write UFD in short, and similarly we shall write DVR for a {\it discrete valuation ring}. 

Given a domain $R$, power series $a,b \in R[[t]]$ and an integer $n \geq 0$, we shall write $a|_nb$, or $a| b \Mod{t^n}$, to mean that $a$ divides $b$ modulo $t^n$, that is, $b \in (a,t^n)$ (note that $a|_0 b$ always holds). We shall write $a \equiv _n b$ to mean that $a \equiv b\Mod{t^n}$. We shall write $a \cong_n b$ to mean that $(a,t^n) = (b,t^n)$; equivalently, $a|_n b$ and $b|_na$; equivalently, there exists $u \in R[[t]]^\times$ such that $a \equiv_n bu$.

Throughout the text, we denote the set of positive integers by $\N$.

\begin{lemma}\label{lem_a|b} Let $R$ be a domain and let $a,b \in R[[t]]$. Then $a|_nb$ for all $n \in \N$ if and only if $a|b$ in $R[[t]]$. \end{lemma}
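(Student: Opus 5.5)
The direction ``$a\mid b$ implies $a\mid_n b$ for all $n$'' is immediate, since $(a)\subseteq (a,t^n)$. For the converse I will build the quotient as a $t$-adic limit of approximate quotients. The one fact I need is that multiplication by a nonzero $a$ in $R[[t]]$ shifts $t$-adic order by exactly $\mathrm{ord}_t(a)$; this is where the hypothesis that $R$ is a domain is used.

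First I dispose of $a=0$. Then $a\mid_n b$ means $b\in (t^n)$ for every $n$, so every coefficient of $b$ vanishes and $b=0=a\cdot 0$.

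Now let $a\neq 0$, and let $k=\mathrm{ord}_t(a)$, so that $a_k\neq 0$ is its lowest nonzero coefficient.

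\emph{Key observation.} If $q\in R[[t]]$ and $aq\equiv_n 0$, then $q\equiv_{n-k}0$ (interpreting this as vacuous when $n\le k$). To see this, suppose $q\neq 0$ has order $m$ with lowest coefficient $q_m\neq 0$. Since $R$ is a domain, $a_kq_m\neq 0$ is the lowest coefficient of $aq$, so $\mathrm{ord}_t(aq)=k+m$. The congruence $aq\equiv_n 0$ then forces $k+m\ge n$, that is, $m\ge n-k$.

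\emph{Construction of the quotient.} For each $n$, the hypothesis $a\mid_n b$ lets me write
\begin{equation*}
b=aq_n+t^nr_n
\end{equation*}
with $q_n,r_n\in R[[t]]$. For $m\ge n$, subtracting the two expressions gives $a(q_n-q_m)\equiv_n 0$. By the key observation, $q_n\equiv_{n-k} q_m$. So for each fixed $j$, the coefficient of $t^j$ in $q_n$ is the same for all $n> j+k$. Let $q$ be the power series whose $j$-th coefficient is this stable value. Then $q\equiv_{n-k} q_n$ for every $n$.

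\emph{Conclusion.} For every $n$ I have
\begin{equation*}
b-aq=a(q_n-q)+t^nr_n\equiv_{n-k}0.
\end{equation*}
Since this holds for all $n$, every coefficient of $b-aq$ vanishes, so $b=aq$ and $a\mid b$. There is no real obstacle here. The only point needing care is that the approximate quotients $q_n$ are not unique, and the key observation is exactly what makes them coherent enough to pass to the limit.
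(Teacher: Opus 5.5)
Your proof is correct and is essentially the paper's argument. In both, the approximate quotients are shown to be $t$-adically Cauchy, because $R$ being a domain means multiplying by a nonzero $a$ of $t$-adic order $k$ raises $t$-adic order by exactly $k$, and the limit is then the quotient. The only difference is that you compare $q_n$ with $q_m$ for arbitrary $m\ge n$ rather than consecutive terms, and you check $b=aq$ explicitly, which the paper leaves implicit.
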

\begin{proof}
    If $a = 0$ the claim holds trivially. Suppose that $a \neq 0$ and that for all $n \geq 1$ there exists $q^{(n)} \in R[[t]]$ such that $aq^{(n)} \equiv_n b $. Then $t^n | a(q^{(n+1)}-q^{(n)})$ for all $n$, which implies that  $t^{n-m} | q^{(n+1)}-q^{(n)}$, for all $n\geq m$, where $m \geq 0$ is the $t$-adic value of $a$. Thus the sequence $\{q^{(n)}\}_{n \geq 1}$ converges $t$-adically to a power series $q \in R[[t]]$, satisfying $aq = b$.
\end{proof}

%If a power series has a non-prime constant term, then it may or may not be irreducible. This leads us to define the notion of a {\it height} of an irreducible power series:

Recall that in general, an integral domain is a UFD if and only if it is atomic and every irreducible in the ring is prime. If $R$ is a UFD then in particular it is a Krull domain, hence so is $R[[t]]$, and in particular it is atomic (see for example the introduction of \cite{Coy22}). 

Thus to prove or disprove that for a UFD $R$ the ring $R[[t]]$ is a UFD, one must understand whether all irreducible elements in $R[[t]]$ are prime. There is at least one type of irreducible power series that is always prime -- those whose constant term is a prime (clearly, such power series are irreducible in $R[[t]]$):

\begin{proposition}
\label{prime2} Let $f = f_0+f_1t+\ldots \in R[[t]]$ where $R$ is an arbitrary domain. If $f_0$ is prime in $R$ then $f$ is prime in $R[[t]]$. \end{proposition}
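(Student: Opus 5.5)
The plan is to identify the quotient ring $R[[t]]/(f)$ with $(R/(f_0))[[t]]$. The latter is a domain because $R/(f_0)$ is a domain, and that gives primality of $f$. Note first that $f$ is a nonzero non-unit in $R[[t]]$: its constant term $f_0$ is a nonzero non-unit of $R$, and a power series is a unit exactly when its constant term is.

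First I show $(f) = (f_0) R[[t]]$ up to the right bookkeeping. More precisely, I want the kernel of the reduction map $\pi: R[[t]] \to (R/(f_0))[[t]]$, which reduces coefficients modulo $f_0$. This kernel is the set of series all of whose coefficients lie in $(f_0)$. It is not literally $(f)$, so instead of an isomorphism I will argue directly with divisibility.

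Suppose $f \mid gh$ fails to imply $f\mid g$ or $f\mid h$. I argue by contradiction via Lemma \ref{lem_a|b}, but a cleaner direct route is the following division step. Every $g \in R[[t]]$ can be written as $g = fq + r$ with $r$ having all coefficients in a fixed set of representatives? That fails over general $R$. So I will use the following alternative instead: $f$ is prime if and only if $R[[t]]/(f)$ is a domain. I compute $R[[t]]/(f,t^n)\cong R[t]/(f_0+\dots+f_{n-1}t^{n-1}, t^n)$ and reason coefficientwise.

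The concrete argument runs as follows. Suppose $gh = fk$ and $f \nmid g$. By Lemma \ref{lem_a|b} there is a least $n$ with $f\nmid_n g$; I then try to show $f \mid_m h$ for all $m$. For this I prove by induction on $m$ the statement: if $gh\equiv_m fk$, then $g \in (f,t^{j})$ or $h\in(f,t^{m-j})$ in a graded sense. The key inductive step reads off the lowest-degree coefficients. After subtracting suitable multiples of $f$ (which changes $g$ by multiples of $f$ and so is harmless), I may assume the lowest nonzero coefficient $g_a$ of $g$ is not divisible by $f_0$; otherwise I can subtract $(g_a/f_0) t^a f$ to kill it. Similarly I normalize $h$ to have lowest coefficient $h_b$ not divisible by $f_0$. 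The degree-$(a+b)$ coefficient of $gh$ is then $g_ah_b$. On the other side, $fk$ can also be normalized: if $k$ has lowest degree $c$, its coefficient there is $f_0k_c$. Comparing, and using that $f_0$ is prime, gives a contradiction unless the normalization process for $g$ or $h$ runs forever. In that case the successive corrections converge $t$-adically, exhibiting $g$ or $h$ as a multiple of $f$.

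The main obstacle is making this normalization rigorous. Repeatedly subtracting $t^a (g_a/f_0) f$ either stops at a coefficient not divisible by $f_0$ or continues forever. In the second case the accumulated quotient converges $t$-adically, since each step raises the degree, and this shows $f\mid g$. I will phrase it as follows: define $q$ coefficient by coefficient so that $g - fq$ has lowest-degree coefficient not in $(f_0)$, or so that $g=fq$. Then do the same for $h$ and for $k$; for $k$, the remainder $gh - f k'$ is a multiple of $f$ that again has this normal form. Finally, take the leading-coefficient comparison. If $g'=g-fq$ and $h'=h-fq'$ are both nonzero with lowest coefficients outside $(f_0)$, then $g'h'\in (f)$. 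Writing $g'h'=fk''$, the lowest coefficient of the left side is $g'_ah'_b\notin(f_0)$ by primality of $f_0$, while the lowest coefficient of the right side lies in $(f_0)$. This is a contradiction.
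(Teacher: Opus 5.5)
Your last paragraph, together with your opening check that $f$ is a nonzero non-unit, is a complete and correct proof. It is essentially the paper's argument written inside $R[[t]]$ instead of $C=R[[t]]/(f)$:
your normal form $g-fq=t^ac$ with $c_0\notin(f_0)$ is the paper's $\bar g=t^r\bar c$ with $\bar c\notin tC$;
your terminating-or-convergent elimination does the job of the separatedness argument via Lemma~\ref{lem_a|b};
and your lowest-coefficient comparison in $g'h'=fk''$ does the job of injectivity of $t$ on $C$ together with $C/tC\cong R/(f_0)$ being a domain.

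Do delete the false starts. The opening plan is false as stated: $R[[t]]/(f)$ is not $(R/(f_0))[[t]]$ in general, since for $R=\mathbb{Z}$ and $f=2-t$ the quotient is the ring of $2$-adic integers, which has characteristic $0$. The induction ``in a graded sense'' and the normalization of $k$ are neither carried out nor needed.
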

\begin{proof}
Let $C = R[[t]]/(f)$. It is immediate to verify that multiplication by $t$ is injective on $C$. Note also that $C$ is $t$-adically separated. That is, $\bigcap_{n \geq 1}t^n C = \{0\}$. Indeed, if  $\bar{a} \in \bigcap_{n \geq 1}t^n C$, $a \in (f,t^n)$ for all $n$, hence $f|a$ in $R[[t]]$ by Lemma \ref{lem_a|b}, hence $\bar{a} = 0$.

Now, we have 
$$C/tC  = R[[t]]/(f,t) = R/(f_0),$$
hence $C/tC$ is a domain. We prove that $C$ is a domain. Let $a \mapsto \bar{a}$ denote the reduction map from $R[[t]]$ to $C = R[[t]]/(f)$. Let $\bar{a},\bar{b}$ be non-zero element of $C$, for some $a,b \in R[[t]]$. Then since $C$ is $t$-adically separated, there exist maximal integers $r,s \geq 0$ such that $\bar{a} \in t^r C, \bar{b} \in t^s C$, respectively. Let $c,d \in R[[t]]$ with $\bar{c},\bar{d} \notin tC$ such that $\bar{a} = t^r\bar{c},\bar{b} = t^s \bar{d}$. Then the images of $\bar{c},\bar{d}$ in $C/tC$ are non-zero. Since $C/tC$ is a domain, we have $\bar{c}\bar{d} \notin tC$, and in particular $\bar{c}\bar{d} \neq 0$ in $C$. Since multiplication by $t$ is injective on $C$, so is multiplication by $t^{r+s}$, hence $\bar{a}\bar{b} = t^{r+s}\bar{c}\bar{d} \neq 0$. Thus $C$ is a domain, hence $f$ is prime in $R[[t]]$.

\end{proof}

(We note that the above simple lemma appears in the literature in special cases, for example when $R$ is a PID \cite[Section 3]{Elliott14}. We could not find a reference for the lemma in the general form above.)

\begin{definition}
       
    Let $0\neq f \in R[[t]]$ be a non-unit. We say that $f$ is {\it irreducible of finite height} if $f$ is irreducible modulo $t^k$ for some $k \geq 1$. If such an integer exists, we call the minimal such $k$ the {\it irreducibility height} of $f$, denote it ht$(f)$, and also say that $f$ is irreducible of height $k$. 
\end{definition}

\noindent Clearly, if $f \in R[[t]]$ is irreducible of finite height, then $f$ is irreducible. The converse is not generally true; see the example following Corollary \ref{cor:finite_height_ufd_criterion}. %Let us pose the following problem: characterize the integral domains $R$ for which every irreducible element in $R[[t]]$ is irreducible of finite height.

Let us note a trivial finite factorization result:

\begin{lemma}\label{zero_coefficient}
    Let $R$ be a domain with $f_0 = 0$. If $f$ is irreducible, then $f$ is irreducible of height 2. 
\end{lemma}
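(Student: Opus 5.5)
We argue directly from the definition of irreducibility modulo $t^2$. Write $f = f_1 t + f_2 t^2 + \cdots$, using that $f_0 = 0$. Since $f$ is irreducible and $f = t\cdot(f_1 + f_2 t + \cdots)$ with $t$ a non-unit, the cofactor $f_1 + f_2t+\cdots$ must be a unit of $R[[t]]$, so $f_1 \in R^\times$. Hence $f \cong_n t$ for every $n$, i.e. $f$ is an associate of $t$. This reduces the problem to showing that $t$ is not irreducible modulo $t^1$ but is irreducible modulo $t^2$, and that the same holds for $f$.

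First, $f$ is not irreducible modulo $t$. Indeed $f \equiv_1 0 \equiv t\cdot t$, a product of two non-units. So $\mathrm{ht}(f) \geq 2$.

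Next, $f$ is irreducible modulo $t^2$. Suppose $f \equiv_2 gh$ with $g,h$ non-units in $R[[t]]$. Comparing constant terms gives $g_0h_0 = 0$, so without loss of generality $g_0 = 0$, since $R$ is a domain. Comparing $t$-coefficients gives $f_1 = g_0h_1 + g_1h_0 = g_1 h_0$. Since $f_1$ is a unit, $h_0$ is a unit of $R$, so $h$ is a unit in $R[[t]]$, a contradiction. Therefore $f$ is irreducible modulo $t^2$, and its height is exactly $2$.

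The only point needing care is the first step, that irreducibility of $f$ forces $f_1$ to be a unit. This uses the standard fact that a power series is a unit exactly when its constant term is a unit in $R$. Everything after that is coefficient comparison.
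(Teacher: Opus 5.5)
Your proof is correct and follows the paper's argument: irreducibility of $f = t\cdot(t^{-1}f)$ forces $f_1 \in R^\times$ (this also covers the case $f_1 = 0$, which the paper dismisses separately), and then comparing the constant and linear coefficients of $f \equiv_2 gh$ shows that one factor has unit constant term. Your extra observation that $f \equiv_1 t\cdot t$ shows the height is exactly $2$ rather than at most $2$, a point the paper leaves implicit.
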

\begin{proof}
If $f_1 = 0$ then $t^2|f$ so $f$ is reducible. Thus $f_1 \neq 0$ and we have $f = t\cdot (t^{-1}f)$, hence the constant term $f_1$ of $t^{-1}f$ must be a unit in $R$. If $a,b \in R[[t]]$ satisfy $f \equiv_2 ab$, then without loss of generality $a_0= 0$, and $f_1 = a_1b_0$, hence both $a_1,b_0$ are units in $R$. Thus $b$ is a unit in $R[[t]]$, and we deduce that $f$ is irreducible mod $t^2$.  
\end{proof}

\section{A factorization lemma}\label{sec:factorization_lemma}

Let us formulate the following factorization lemma, whose proof is a consequence of K{\"o}nig's lemma. Later, the proof of our main result is achieved by establishing the conditions of this lemma, first in the DVR case and later in the general Krull domain case. 

\begin{lemma}\label{tree}
    Let $R$ be a domain, let $f \in R[[t]]$ and let $d,e$ be fixed non-zero elements in $R$. For each $n \in \N$ let $C_n$ be the set of all pairs $(a,b) \in R[[t]]^2$ satisfying $f \equiv_n ab, a_0 = d, b_0 = e$. For any $n,\ell \in \N$, let 
    $$I_{n,\ell} = \{(a,t^\ell)|(a,b) \in C_n\}.$$
    Suppose that: \begin{enumerate}
        \item The sets $C_n$ are non-empty for arbitrarily large $n$.
        \item For every $\ell \in \N$ there exists $M_\ell \in \N$ such that $\bigcup_{n \geq M_\ell} I_{n,\ell}$ is finite.
    \end{enumerate}
    Then there exist $a,b \in R[[t]]$ with $a_0 = d, b_0 = e$ such that $f  = ab$. In particular, if $d,e \notin R^\times$ then $f$ is reducible in $R[[t]]$. 
\end{lemma}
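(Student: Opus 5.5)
The plan is to extract a compatible sequence of ideals $J_\ell=(a^{(\ell)},t^\ell)$, one for each $\ell$, then to choose representatives $a^{(\ell)}$ that converge $t$-adically to some $a$, and finally to conclude with Lemma \ref{lem_a|b}. Because condition 2 makes everything finite, a direct finiteness argument should suffice, with no explicit appeal to K\"onig's lemma.

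\emph{Monotonicity.} If $(a,b)\in C_n$ and $m\le n$, then $(a,b)\in C_m$. So $C_1\supseteq C_2\supseteq\cdots$, and condition 1 forces every $C_n$ to be non-empty. Consequently $I_{1,\ell}\supseteq I_{2,\ell}\supseteq\cdots$ for each fixed $\ell$. For $n\ge M_\ell$ these are non-empty subsets of the finite set $\bigcup_{n\ge M_\ell}I_{n,\ell}$. A decreasing chain of non-empty subsets of a finite set is eventually constant. Hence there is $N_\ell$ with $I_{n,\ell}=T_\ell:=\bigcap_n I_{n,\ell}\neq\emptyset$ for all $n\ge N_\ell$.

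\emph{Compatibility.} The map $J\mapsto J+(t^\ell)$ sends $I_{n,\ell+1}$ onto $I_{n,\ell}$. Taking $n\ge\max(N_\ell,N_{\ell+1})$ shows that it maps $T_{\ell+1}$ onto $T_\ell$. Starting from any $J_1\in T_1$, we can therefore choose inductively $J_{\ell+1}\in T_{\ell+1}$ with $J_{\ell+1}+(t^\ell)=J_\ell$.

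\emph{Choosing representatives.} We choose, inductively in $\ell$, pairs $(a^{(\ell)},b^{(\ell)})\in C_{n_\ell}$ with $n_\ell\ge\ell$, $(a^{(\ell)},t^\ell)=J_\ell$, and $a^{(\ell+1)}\equiv_\ell a^{(\ell)}$. Given $a^{(\ell)}$, pick any pair $(a',b')\in C_n$ with $n\ge \ell+1$ and $(a',t^{\ell+1})=J_{\ell+1}$. The compatibility $J_{\ell+1}+(t^\ell)=J_\ell$ gives $(a',t^\ell)=(a^{(\ell)},t^\ell)$, so $a'\equiv_\ell a^{(\ell)}u$ for some unit $u\in R[[t]]^\times$. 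Comparing constant terms gives $d=du_0$, and since $d\neq0$ in the domain $R$, we get $u_0=1$. Then $(a'u^{-1},b'u)$ still lies in $C_n$: the product is unchanged and the constant terms are still $d,e$. It also generates the same ideal $J_{\ell+1}$ modulo $t^{\ell+1}$, and $a'u^{-1}\equiv_\ell a^{(\ell)}$. We take this pair as $(a^{(\ell+1)},b^{(\ell+1)})$. This normalization by a unit with constant term $1$ is the only delicate point, and it is exactly where $d\ne0$ is used.

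\emph{Conclusion.} The sequence $a^{(\ell)}$ converges $t$-adically to some $a$, with $a\equiv_\ell a^{(\ell)}$ and $a_0=d$. For each $\ell$ we have $f\equiv_\ell a^{(\ell)}b^{(\ell)}\equiv_\ell ab^{(\ell)}$, so $a|_\ell f$ for all $\ell$. By Lemma \ref{lem_a|b} there is $b$ with $f=ab$. Comparing constant terms gives $de=f_0=db_0$, so $b_0=e$ because $R$ is a domain and $d\ne0$. If $d,e\notin R^\times$, then neither $a$ nor $b$ is a unit in $R[[t]]$, since a power series is a unit exactly when its constant term is. Hence $f$ is reducible.
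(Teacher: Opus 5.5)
Your proof is correct. Its overall structure matches the paper's: finite sets of truncated ideals at each level, a compatible branch under $J\mapsto J+(t^\ell)$, unit normalization to make the $a^{(\ell)}$ $t$-adically Cauchy, and a limit. You depart from the paper in two steps.

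First, the branch. The paper defines $T_\ell$ as the ideals occurring in $I_{n,\ell}$ for arbitrarily large $n$, and then invokes K\"onig's lemma on the tree $T=\bigcup_\ell T_\ell$. You instead observe that $C_{n+1}\subseteq C_n$. Hence the $I_{n,\ell}$ decrease in $n$ and stabilize at $T_\ell=\bigcap_n I_{n,\ell}$, which coincides with the paper's $T_\ell$. The parent map $T_{\ell+1}\to T_\ell$ is then surjective, so a branch can be built by plain induction with no compactness principle. This makes explicit that the tree has no dead ends, which is why K\"onig's lemma is not really needed. It also shows that hypothesis 1 is equivalent to every $C_n$ being nonempty.

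Second, the cofactor. The paper shows that the $b^{(\ell)}$ also form a Cauchy sequence. You instead get $b$ from $a|_\ell f$ for all $\ell$ via Lemma \ref{lem_a|b}, and recover $b_0=e$ from $f_0=de$ and $d\neq 0$. These are equivalent, since that lemma's proof is the same Cauchy argument.

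You also spell out why the normalizing unit satisfies $u_0=1$, so that $a'u^{-1}$ keeps constant term $d$ and stays in $C_n$. The paper passes over this with ``multiplying with suitable units.''

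The paper's K\"onig formulation does not use the nesting of the $C_n$, so it is the more portable argument. For the lemma as stated, your route is more elementary and fully self-contained.
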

\begin{proof}
    Since every $C_n$ is non-empty for some $n$, we have, in particular, $f_0 = de$. For every $\ell \in \N$ let ${\cal{F}}_\ell = \bigcup_{n \geq M_\ell} I_{n,\ell}$, and let us define a set of ideals $T_\ell$ as follows: An ideal $I \subset R[[t]]$ belongs to $T_\ell$ if for arbitrarily large $n \in \N$ there exist $(a,b) \in C_n$ such that $I = (a,t^\ell)$. 
    
    Note that $T_\ell \subseteq {\cal{F}}_\ell$. Indeed, if $I \in T_\ell$ then $I = (a,t^\ell)$ for some $n \geq M_\ell$ and $(a,b) \in C_n$, hence $I \in {\cal{F}}_\ell$. In particular, $T_\ell$ is finite. 

    Note that $T_\ell$ is non-empty for all $\ell$. Indeed, by assumption the sets $C_n$ are non-empty for arbitrarily large $n$, while only finitely many ideals occur in ${\cal{F}}_\ell = \bigcup_{n \geq M_\ell} I_{n,\ell}$, hence at least one of these ideals must occur for arbitrarily large $n$. 

    Now let $T = \bigcup_{\ell \geq 1}T_\ell$. Note that $T_1 =\{ (d,t)\}$ is a singleton. We now view $T$ as a directed tree, as follows: for an ideal $I \in T_{\ell+1}$ we define its parent as the ideal $I+(t^\ell)$. Clearly, $I+(t^\ell) \in T_\ell$. By K{\"o}nig's lemma \cite{Konig27}, there exists an infinite branch in $T$. That is, a sequence $I_1,I_2,\ldots$ such that $I_{\ell+1}+(t^\ell) = I_\ell$ for all $\ell \geq 1$. For each $\ell$ choose any factor $a^{(\ell)} \in R[[t]]$ of $f$ modulo $t^\ell$, with $a^{(\ell)}_0 = d$ such that $I_\ell = (a^{(\ell)},t^\ell)$. Then $$(a^{(\ell)},t^\ell) = (a^{(\ell+1)},t^{\ell+1})+(t^\ell) =(a^{(\ell+1)},t^{\ell}).$$  Thus $a^{(\ell+1)} \cong_\ell a^{(\ell)}$ for all $\ell$. For each $\ell \in \N$, let $b^{(\ell)} \in R[[t]]$ such that $a^{(\ell)}b^{(\ell)} \equiv_\ell f$. Then necessarily $b^{(\ell)}_0 = e$. By multiplying with suitable units in $R[[t]]$ at every level, we may assume that $a^{(\ell+1)} \equiv_\ell a^{(\ell)}$ for all $\ell$. Thus $(a^{(\ell)})_{\ell \geq 1}$ is a $t$-adically Cauchy sequence. From
    $$a^{(\ell)}b^{(\ell)} \equiv_\ell f \equiv_\ell a^{(\ell+1)}b^{(\ell+1)} \equiv_\ell a^{(\ell)}b^{(\ell+1)}$$ we get that $$a^{(\ell)}(b^{(\ell)}-b^{(\ell+1)}) \equiv_\ell 0,$$
and since $a^{(\ell)}_0= d \neq 0$, it follows that $b^{(\ell)}-b^{(\ell+1)} \equiv_\ell 0$. Thus the sequence $(b^{(\ell)})_{\ell \geq 1}$ is also Cauchy.  The respective limits $a,b \in R[[t]]$ then satisfy $a_0 = d, b_0 = e$ and $f = ab$.\end{proof}

Let us now demonstrate the utility of Lemma \ref{tree} in a special case. 

\begin{proposition}\label{finite_quotient}
    Let $R$ be a domain. Suppose that for every non-zero $r \in R$ with $r \notin R^\times$, the quotient ring $R/(r)$ is finite. Then every irreducible element in $R[[t]]$ is irreducible of finite height. 
\end{proposition}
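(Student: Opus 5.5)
We argue by contraposition, using Lemma \ref{tree}. Let $f\in R[[t]]$ be irreducible and suppose $f$ is not irreducible modulo $t^n$ for any $n$; we derive a contradiction. By Lemma \ref{zero_coefficient} we may assume $f_0\neq 0$. Then $f_0$ is not a unit, since $f$ is not a unit.

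For each $n$ pick non-units $a,b$ with $f\equiv_n ab$. Since $f_0=a_0b_0$ and $a,b$ are non-units in $R[[t]]$, the constant terms $a_0,b_0$ are non-zero non-units of $R$ dividing $f_0$. The first step is to reduce to finitely many possibilities for the pair $(a_0,b_0)$. Reduce modulo $f_0$: the ring $R/(f_0)$ is finite, and the ideal $(a_0)$ contains $(f_0)$, so it corresponds to one of finitely many ideals of $R/(f_0)$. Hence there are only finitely many possible ideals $(a_0)$, and finitely many $(b_0)$. Multiplying $a$ by a unit $u\in R^\times$ and $b$ by $u^{-1}$, we can normalize $a_0$ to a fixed representative $d$ of its associate class; then $b_0$ is forced to equal $e=f_0/d$. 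By pigeonhole, there is a fixed pair $(d,e)$ of non-units with $C_n\neq\emptyset$ for arbitrarily large $n$. This gives condition (1) of Lemma \ref{tree}.

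For condition (2), fix $\ell$ and take $M_\ell=1$. We must show that the ideals $(a,t^\ell)$ with $a_0=d$ range over a finite set. The key observation is that $(a,t^\ell)\supseteq(d^\ell,t^\ell)$. Indeed, write $a=d+tg$ with $g\in R[[t]]$. Then $d\equiv -tg \Mod{a}$, so $d^\ell\equiv(-tg)^\ell \Mod{a}$, which lies in $(t^\ell)$; hence $d^\ell\in(a,t^\ell)$. The quotient $R[[t]]/(d^\ell,t^\ell)$ is isomorphic to $(R/(d^\ell))[t]/(t^\ell)$. This is finite, because $d^\ell$ is a non-zero non-unit, so $R/(d^\ell)$ is finite by hypothesis. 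A finite ring has finitely many ideals, so only finitely many ideals $(a,t^\ell)$ occur.

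Lemma \ref{tree} then yields $f=ab$ with $a_0=d$ and $b_0=e$, both non-units, contradicting the irreducibility of $f$. The only mildly delicate points are the normalization of $(a_0,b_0)$ and the computation showing $d^\ell\in(a,t^\ell)$. Note that finiteness of $R/(r)$ is needed only for non-zero non-units $r$, which is exactly what we have for $r=f_0$ and $r=d^\ell$.
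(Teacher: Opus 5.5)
Your proof is correct and follows the paper's argument step by step. You reduce to $f_0\neq 0$, use finiteness of $R/(f_0)$ to fix the constant terms $(d,e)$ up to units, show that every ideal $(a,t^\ell)$ with $a_0=d$ contains $(d^\ell,t^\ell)$ (your congruence $d\equiv -tg \Mod{a}$ is just a repackaging of the paper's binomial expansion), and then invoke Lemma \ref{tree}.
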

\begin{proof}
Let $\ell \in \N$ and let $0 \neq d \in R$. We claim that there exist finitely many ideals in $R[[t]]$ of the form $(a,t^\ell)$ with $a_0 = d$.  Indeed, if $d \in R^\times$, then for every $a \in R[[t]]$ with $a_0 = d$ we have $(a,t^\ell) = R$, and the claim holds trivially. Suppose that $d \notin R^\times$, and write $a = d+tb$, $b \in R[[t]]$. Then $d^\ell = (a-tb)^\ell$, and by the binomial expansion we see that $d^\ell \in (a,t^\ell)$. Thus every ideal of the form $(a,t^\ell)$ contains the fixed ideal $(d^\ell,t^\ell)$, and therefore such ideals correspond to ideals in the quotient ring $R[[t]]/(d^\ell,t^\ell)$.  The latter ring is finite, since by our assumption the quotient $R/(d^\ell)$ is finite, and in particular $R[[t]]/(d^\ell,t^\ell)$ contains finitely many ideals. This proves our claim. 

Now, let $f \in R[[t]]$ be irreducible. If $f_0 = 0$, then $f$ is irreducible of finite height by Lemma \ref{zero_coefficient}. Suppose that $f_0 \neq 0$. We show that $f$ is irreducible of finite height. Suppose to the contrary that modulo every $n \geq 1$, there exists $a^{(n)},b^{(n)}$ with non-unit constant terms, such that $f \equiv_n a^{(n)}b^{(n)}$. In each such factorization, we have $a^{(n)}_0b^{(n)}_0 = f_0$. Note that $f_0$ has finitely many factorizations up to units, since these correspond to ideals containing $(f_0)$. Thus
we may replace $(a^{(n)},b^{(n)})_{n \geq 1}$ with a suitable subsequence and multiply with suitable units in $R[[t]]$, to assume that $d = a^{(n)}_0$ and $e = b^{(n)}_0$ are independent of $n$, where $d,e$ are non-units in $R$. Let $\ell \in \N$, and for any $n \in \N$, let $I_n$ be the set of ideals of the form $(a,t^\ell)$, where $a$ is a factor of $f$ mod $t^n$ with $a_0 = d$. Then $I_n$ is contained in the set of all ideals of the form $(a,t^\ell)$ with $a_0 = d$, which is finite, as noted above. Hence $$\bigcup_{n \geq 1}I_n$$
is finite. This establishes the conditions of Lemma \ref{tree}, and hence there exists a factorization $f = ab$ with $a_0 = d,b_0= e$, a contradiction.

\end{proof}

Proposition \ref{finite_quotient} establishes Theorem \ref{finite_irreducibilty_theorem} in the special cases where $R = \Z$ is the ring of integers, or $R = \F_q[x]$ is the ring of polynomials in one variable over a finite field $\F_q$. We note that there are several papers focused on factorization and reducibility in $\mathbb Z[[t]]$, for example \cite{BGW07,BG08,BGW12,BGW14,BGW14b,Elliott14,DS21}, but Theorem \ref{finite_irreducibilty_theorem} for $R = \Z$ does not seem to appear in any of them.

Let us also note that in the special case where $R$ is a Noetherian, excellent Henselian local domain, so is the ring $R[[t]]$, and in this case Theorem \ref{finite_irreducibilty_theorem} follows as an immediate consequence of Artin's strong approximation theorem (see \cite[Theorem 3.7]{ScheidererWeightedSOS}, or \cite[\S1, Definition 1.1, and the main theorem on pp.~145--146]{PfisterPopescu1975}).

\section{$C$-prime elements}\label{sec:quasi_primality}

In this section we prove that if a power series over a DVR is irreducible of finite height, then it is also ``prime of finite height'', in the sense of $C$-prime elements that we will make precise below.

\begin{definition} 
%\begin{itemize}
%\item We say that $A$ is \emph{irreducible of finite height} if there exists $d\in\mathbb{N}$ such that $A$ is irreducible modulo $t^d$. We call the minimal such $d$ the \emph{height} of $A$ (denoted ht$(A)$).
%\item 
Let $R$ be a domain and let $0\neq a\in R[[t]]$ be a non-unit. Let $C\in\mathbb{N}$. We say that $a$ is \emph{$C$-prime} if whenever $a|_n b_1b_2$ in $R[[t]]$, there exist $i,j\leq n$ such that $a|_ib_1$, $a|_jb_2$ and $i+j> n-C$.

%\end{itemize}
\end{definition}

\noindent\textbf{Note:} If $a\in R[[t]]$ is $C$-prime, then $a$ is $D$-prime for any integer $D\geq C$.

\begin{lemma}\label{prime bound}
Suppose $a\in R[[t]]$ is $C$-prime, and for $n\in\mathbb{N}$ set $n(C):=\max\{0,\lceil\frac{n-C}{2}\rceil\}$. Then if $a|_nb_1b_2$ then $a|_{n(C)}b_1$ or $a|_{n(C)}b_2$.
\end{lemma}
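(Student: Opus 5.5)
This follows directly from the definition of $C$-prime together with the monotonicity of the relation $a|_k b$ in $k$: if $a|_i b$ and $k\le i$, then $b\in(a,t^i)\subseteq(a,t^k)$, so $a|_k b$. The plan is to apply the definition to the hypothesis and then distribute the resulting exponents via a pigeonhole bound.

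First I dispose of the trivial case. If $n(C)=0$, the conclusion holds because $a|_0 b$ always holds.

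Now assume $n(C)\ge 1$, so $n(C)=\lceil\frac{n-C}{2}\rceil$ and $n-C\geq 1$. Since $a$ is $C$-prime and $a|_n b_1b_2$, the definition gives integers $i,j\le n$ with $a|_i b_1$, $a|_j b_2$ and $i+j>n-C$. Because these are integers, $i+j\ge n-C+1$, so $\max\{i,j\}\ge \frac{n-C+1}{2}\ge \lceil\frac{n-C}{2}\rceil = n(C)$. The middle inequality holds since for an integer $m$ one has $\lceil m/2\rceil\le (m+1)/2$.

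Say $i\ge n(C)$; the case $j\ge n(C)$ is symmetric. By monotonicity, $a|_{n(C)}b_1$, which is the claim.

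There is no real obstacle here. The only care needed is the ceiling arithmetic: the strict inequality $i+j>n-C$ is used in its integer form $i+j\geq n-C+1$, which is what makes the ceiling come out right.
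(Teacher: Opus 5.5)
Your proof is correct and is essentially the paper's argument. The paper phrases it by contradiction: if neither divisibility holds to order $n(C)$, then every admissible $i,j$ is $<\frac{n-C}{2}$, forcing $i+j<n-C$. You run the same pigeonhole bound directly, using monotonicity of $a|_k b$ in $k$ and the integer ceiling estimate.
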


\begin{proof}

We may assume that $n>C$, so suppose to the contrary that if $i,j\geq 0$ with $a|_ib_1$ and $a|_jb_2$ then $i,j<\frac{n-C}{2}$. Then $i+j<n-C$. But $a$ is $C$-prime, so we know that we can choose $i,j$ such that $i+j>n-C$ -- contradiction.\end{proof}

The terminology ``$C$-prime" is justified by the following lemma:

\begin{lemma}\label{C-prime means prime} Let $R$ be a domain and let $0 \neq a \in R[[t]]$ be $C$-prime for some $C \geq 1$. Then $a$ is prime in $R[[t]]$.
    
\end{lemma}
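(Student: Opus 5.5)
The plan is to check the definition of primality directly, using Lemma \ref{prime bound} together with Lemma \ref{lem_a|b}. First, $a$ is by hypothesis non-zero, and it is a non-unit because the definition of $C$-prime only applies to non-units. Now suppose $a \mid b_1b_2$ in $R[[t]]$. Then $a|_n b_1b_2$ holds for every $n\in\N$.

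Applying Lemma \ref{prime bound} for each $n$, we get that $a|_{n(C)}b_1$ or $a|_{n(C)}b_2$, where $n(C)=\max\{0,\lceil\frac{n-C}{2}\rceil\}$. Since $n(C)\to\infty$ as $n\to\infty$, one of the two alternatives, say $a|_{n(C)}b_1$, holds for infinitely many $n$. Hence it holds for an unbounded set of values $n(C)$.

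Divisibility modulo $t^m$ is monotone: if $a|_m b$ and $m'\le m$, then $b\in(a,t^m)\subseteq(a,t^{m'})$, so $a|_{m'} b$. Since $a|_m b_1$ holds for arbitrarily large $m$, it therefore holds for every $m\in\N$. Lemma \ref{lem_a|b} then gives $a\mid b_1$ in $R[[t]]$. The same argument applies if it is $b_2$ that occurs infinitely often. So $a$ is prime.

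No step is a real obstacle. The only points needing care are the pigeonhole choice of a single factor that works for infinitely many $n$, and the monotonicity of $|_m$ in $m$ used to pass from infinitely many levels to all levels.
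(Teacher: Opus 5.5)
Your proof is correct and is essentially the paper's argument. The paper applies the definition of $C$-primality directly: for every $n$ it gets $i+j>n-C$, so one of $i,j$ is unbounded, and it concludes with Lemma \ref{lem_a|b}. Your route through Lemma \ref{prime bound} repackages that same bound, and your explicit monotonicity remark for $|_m$ fills in a step the paper leaves implicit.
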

\begin{proof}
    Let $b_1,b_2 \in R[[t]]$ such that $a|b_1b_2$. Then $a|_nb_1b_2$ for all $n \geq 1$. Thus there exist $i,j$ such that $a|_ib_1$, $a|_jb_2$ and $i+j> n-C$. Since this holds for unbounded $n$, there exists arbitrarily large $i$ such that $a|_i b_1$, or arbitrarily large $j$ such that $a|_j b_2$. In the first case it follows that $a|b_1$, and in the second case that $a|b_2$, by Lemma \ref{lem_a|b}.
\end{proof}

\begin{lemma}\label{prime implies irreducible (finite height)}
If $f\in R[[t]]$ is $C$-prime for some $C\geq 1$, then $f$ is irreducible of finite height, and \emph{ht}$(f)\leq C+1$.
\end{lemma}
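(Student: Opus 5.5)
We will show directly that $f$ is irreducible modulo $t^{C+1}$; since $\mathrm{ht}(f)$ is the minimal such exponent, this gives $\mathrm{ht}(f)\le C+1$. We argue by contradiction. Suppose there are non-units $g,h\in R[[t]]$ with $f\equiv_{C+1} gh$. Then $f|_{C+1}gh$. Applying the definition of $C$-prime with $n=C+1$, we get integers $i,j\le C+1$ with $f|_i g$, $f|_j h$ and $i+j>1$, that is, $i+j\ge 2$.

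Next we split into cases. If one of the two indices is at least $1$, say $i\ge 1$, then $f|_1 g$, which means $g_0\in (f_0)$ in $R$. The case $j\ge 1$ is symmetric with $h_0\in(f_0)$. Since $i+j\ge 2$, at least one index is $\ge1$, so we may assume $g_0=f_0c$ for some $c\in R$. Reducing $f\equiv_{C+1}gh$ modulo $t$ gives $f_0=g_0h_0=f_0ch_0$.

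To finish, we use that $f$ is a nonzero non-unit (built into the definition of $C$-prime). The main subtlety is the case $f_0=0$. If $f_0\neq 0$, cancelling gives $ch_0=1$, so $h_0$ is a unit in $R$ and $h$ is a unit in $R[[t]]$, a contradiction. If $f_0=0$, then also $g_0h_0=0$. To handle this, I would first show that $f$ is prime by Lemma \ref{C-prime means prime}, hence irreducible in $R[[t]]$. Then Lemma \ref{zero_coefficient} gives irreducibility modulo $t^2$, and $2\le C+1$ because $C\ge1$. So the bound holds in this case as well, and $f$ is irreducible of finite height with $\mathrm{ht}(f)\le C+1$ in both cases.
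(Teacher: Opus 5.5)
Your proof is correct and follows essentially the same route as the paper. You handle $f_0=0$ via Lemma \ref{C-prime means prime} and Lemma \ref{zero_coefficient}, and otherwise apply $C$-primality with $n=C+1$ to force one factor to be a unit; the only cosmetic difference is that you read off the contradiction from constant terms ($f|_1 g$), whereas the paper cancels $f$ modulo $t^i$.
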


\begin{proof}

Since $f$ is $C$-prime, we know that it is prime by Lemma \ref{C-prime means prime}. Since all primes in a domain are irreducible, this implies that $f$ is irreducible. In particular, if $f_0= 0$, then $f$ is irreducible of height 2 by Lemma \ref{zero_coefficient}, so we may assume that $f_0\neq 0$.\\

\noindent Suppose for contradiction that $f$ is not irreducible modulo $t^{C+1}$. Choose non-units $a,b\in R[[t]]$ such that $f\equiv ab\Mod{t^{C+1}}$.

Then $f|_{C+1}ab$, so applying $C$-primality we see there exist $i,j\leq C+1$ such that $f|_ia$, $f|_jb$ and $i+j>C+1-C>0$. Thus $i>0$ or $j>0$. Assuming without loss of generality that $i>0$, it follows that $a\equiv fa'\Mod{t^i}$, so $f\equiv ab\equiv fa'b\Mod{t^i}$. Since $f_0\neq 0$, it follows that $a'b\equiv 1\Mod{t^i}$, and hence $b$ is a unit in $R[[t]]$, a contradiction.\end{proof}

%\noindent Let us remark that a consequence of Theorem \ref{finite_irreducibilty_theorem} is that for any Krull domain $R$, to prove that $R[[t]]$ is a UFD, it suffices to show that if $f\in R[[t]]$ is irreducible of finite height, then it is $C$-prime for some $C\geq 1$. The remainder of this section will be dedicated to proving this statement over a discrete valuation ring.

For the rest of this section, let $V$ be a DVR, and denote its corresponding valuation by $v$. We shall use capital letters to denote power series in $V[[t]]$, as opposed to lowercase letters with which we shall denote power series over a general domain elsewhere, in order to add further visual distinction between the two situations.

The purpose of this section is to prove a quantitative form of primality over a DVR: if $A$ is irreducible modulo a fixed power $t^d$ and if $v(A_0)$ is fixed, then $A$ is $C$-prime for a constant $C$ which depends only on $d, v(A_0)$. The existence of such a constant will be used in the next section.

\begin{lemma}\label{auxillary}
If $A,B_1,B_2\in V[[t]]$ such that $v(A_0)=m$ and $A|_n B_1B_2$, and let $i,j\geq 0$ be maximal such that $i,j\leq n$, $A|_i B_1$ and $A|_j B_2$, and assume that $i+j<n$. Then writing $B_1=AX+t^iS$ and $B_2=AY+t^jT$, it follows that $0<v(S_0),v(T_0)<m$ and there exists $W\in V[[t]]$ such that $v(W_0)<m$ and $$AW\equiv ST\Mod{t^{n-i-j}}.$$
\end{lemma}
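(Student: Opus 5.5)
The plan is to read off everything from the two facts that $i$ and $j$ are \emph{maximal}, and that $A|_nB_1B_2$, by comparing constant terms. Throughout I use that $A_0\neq 0$: the statement fixes $v(A_0)=m$ as a finite value. Since $V$ is a domain, $A$ is then a non-zero-divisor modulo every power of $t$. Indeed, if $AZ\equiv_k 0$, then looking at the lowest nonzero coefficient of $Z$ (which gets multiplied by $A_0$) shows $t^k\mid Z$.

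\textbf{Step 1: $v(S_0),v(T_0)<m$.} Since $i+j<n$, we have $i<n$. So maximality says $A\nmid_{i+1}B_1$. Suppose instead that $v(S_0)\geq m=v(A_0)$. Then $S_0=A_0c$ for some $c\in V$, so $S-Ac\equiv_1 0$. Writing $S=Ac+tS'$ gives
$$B_1=A(X+t^ic)+t^{i+1}S',$$
so $A|_{i+1}B_1$, contradicting maximality. Hence $v(S_0)<m$. The same argument with $j<n$ gives $v(T_0)<m$. (Incidentally, this forces $m>0$.)

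\textbf{Step 2: construct $W$.} Expanding the product gives
$$B_1B_2=A\bigl(AXY+t^jXT+t^iYS\bigr)+t^{i+j}ST.$$
Hence $A|_nB_1B_2$ implies $A|_n t^{i+j}ST$, say $t^{i+j}ST=AZ+t^nR$. Reducing modulo $t^{i+j}$ gives $AZ\equiv_{i+j}0$. By the non-zero-divisor remark, $Z=t^{i+j}W$ for some $W\in V[[t]]$. Cancelling $t^{i+j}$ (legitimate in $V[[t]]$) yields
$$ST=AW+t^{\,n-i-j}R,$$
that is, $AW\equiv ST \Mod{t^{n-i-j}}$.

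\textbf{Step 3: valuations.} Since $n-i-j\geq 1$, comparing constant terms gives $S_0T_0=A_0W_0$. Thus $v(W_0)=v(S_0)+v(T_0)-m$.
\begin{itemize}
\item Step 1 gives $v(T_0)<m$, so $v(W_0)<v(S_0)<m$. (If $S_0=0$ the inequality $v(S_0)<m$ already fails, so $S_0\neq 0$, and similarly $T_0\neq0$ and $W_0\neq 0$.)
\item Since $W_0\in V$, we have $v(W_0)\geq 0$. Hence $v(S_0)\geq m-v(T_0)>0$, and symmetrically $v(T_0)>0$.
\end{itemize}
This gives all the claimed inequalities.

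The only point needing care is the cancellation of $t^{i+j}$ in Step 2, which relies on $A_0\neq0$. Everything else is a direct constant-term comparison, so I expect no serious obstacle.
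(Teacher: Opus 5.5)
Your proof is correct and is essentially the paper's argument: you bound $v(S_0),v(T_0)$ via maximality of $i,j$, then compare constant terms in $S_0T_0=A_0W_0$. The only cosmetic difference is that you construct $W$ by expanding $B_1B_2$ and cancelling $t^{i+j}$ in one step (using $A_0\neq 0$), whereas the paper cancels $t^i$ and then $t^j$ successively.
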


\begin{proof}

Writing $B_1B_2\equiv HA\Mod{t^n}$, we see that $AB_2X+t^iSB_2\equiv HA\Mod{t^n}$. So since $i<n$ we have $A(H-B_2X)\in (t^i)$, and hence $H-B_2X\in (t^i)$.

Setting $Z:=\frac{H-B_2X}{t^i}$ we have $$AZ\equiv SB_2\equiv S(AY+t^jT)\Mod{t^{n-i}},$$ so since $j<n-i$ it follows that $Z-SY\in (t^j)$. Setting $W:=\frac{Z-SY}{t^j}$, we have $$AW\equiv TS\Mod{t^{n-i-j}}$$ Moreover, since $v(A_0)=m$, if $v(S_0)\geq m$, then $A\mid_{i+1} t^iS$, so $t^i S\in (A,t^{i+1})$, and thus $B_1=AX+t^iS\in (A,t^{i+1})$, contradicting the maximality of $i$. So $v(S_0)<m$, and similarly $v(T_0)<m$. So if $k:=v(W_0)$, then since $A_0W_0=T_0S_0$, we see that $k=v(S_0)+v(T_0)-m<2m-m=m$. 

Also, since $v(S_0)<m$, $v(T_0)=m-v(S_0)+v(W_0)>m-m+v(W_0)\geq 0$, and similarly $v(S_0)>0$.\end{proof}

\begin{corollary}\label{valuation_one_case}
If $A\in V[[t]]$ with $v(A_0)=1$, then $A$ is 1-prime.
\end{corollary}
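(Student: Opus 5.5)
We argue by contradiction, using Lemma \ref{auxillary} with $m=1$. First note that $A$ is a non-zero non-unit, since $v(A_0)=1$. Suppose $A|_n B_1B_2$ for some $n$. Let $i,j\le n$ be maximal with $A|_iB_1$ and $A|_jB_2$; these exist because $A|_0$ always holds and $i,j$ range over a bounded set. We must show $i+j>n-1$, that is, $i+j\ge n$.

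Suppose instead that $i+j<n$. Lemma \ref{auxillary} then applies with $m=v(A_0)=1$. Writing $B_1=AX+t^iS$ and $B_2=AY+t^jT$ as in the lemma, it gives
$$0<v(S_0)<m=1.$$
No integer lies strictly between $0$ and $1$, so this is impossible. Hence $i+j\ge n>n-1$, which is exactly the $1$-prime condition with $C=1$.

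The only point needing care is that the maximal $i,j$ taken in the definition of $C$-prime are the ones used in Lemma \ref{auxillary}. That lemma assumes $i,j$ are maximal subject to $i,j\le n$, and so does our choice, so the hypotheses match. The step one might expect to be the obstacle, producing $W$ with $AW\equiv ST$, is not needed here: the valuation bound $0<v(S_0)<m$ from Lemma \ref{auxillary} already gives the contradiction.
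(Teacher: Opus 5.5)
Your proof is correct and is essentially the paper's own argument: take the maximal $i,j$, assume $i+j<n$, apply Lemma \ref{auxillary} with $m=1$ to get $0<v(S_0)<1$, which is impossible, and conclude $i+j\ge n>n-1$. You are also right that the $AW\equiv ST$ part of that lemma is not needed here.
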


\begin{proof}

Suppose $B_1,B_2\in V[[t]]$ satisfy $A|_n B_1B_2$. Then if $i,j\geq 0$ are maximal such that $i,j\leq n$, $A|_iB_1$ and $A|_jB_2$,  then suppose for contradiction that $i+j<n$. Using Lemma \ref{auxillary}, we can write $B_1=AX+t^iS$ and $B_2=AY+t^jT$, with $0<v(S_0),v(T_0)<v(A_0)=1$, a contradiction.
Hence $i+j\geq n>n-1$, so $A$ is 1-prime.\end{proof}

\noindent Now, if $A\in V[[t]]$ and $v(A_0)=1$, then $A$ is irreducible of height $1$, and it follows from Corollary \ref{valuation_one_case} that $A$ is $1$-prime.

\begin{definition}
Given $m>1$, we say that $L\in\mathbb{N}$ \emph{controls primality to degree $m$} if for all $A\in V[[t]]$, irreducible of finite height with $v(A_0)<m$, $A$ is $L\text{ht}(A)$-prime.
\end{definition}

\noindent For example, using Corollary \ref{valuation_one_case}, we see that 1 controls primality to degree 2.

%and let us suppose that we have defined integers $$1=N_1<N_2<\dots <N_{m-1}$$ such that if $A$ is irreducible of height $d$ and $v(A_0)=k$ for $k<m$, then $A$ is $N_kd$-prime.

\begin{lemma}\label{extra}
If $d,d',m,s,N,L\in\mathbb{N}$, $m>1$, $s<m$, $A,W_1,\dots,W_s,S,T\in V[[t]]$ non-units, with $A$ irreducible of height $d$, $W_1,\dots,W_s$ irreducible of height at most $d'$, $v(A_0)=m$, $v(T_0),v(S_0)<m$ and $AW_1\cdots W_s\equiv TS\Mod{t^N}$. If $L$ controls primality to degree $m$, then $$N\leq 2^md+(2^m-1)Ld'$$
\end{lemma}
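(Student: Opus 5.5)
The plan is to peel the factors $W_s,W_{s-1},\dots,W_1$ off the right-hand side one at a time. Each $W_k$ is $C$-prime for a fixed $C$, so each step costs only a bounded loss in the exponent of $t$. After all $s$ steps we reach a factorization of $A$ into two non-units modulo a power of $t$, and irreducibility of height $d$ forces that power to be less than $d$.

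\textbf{Step 1: each $W_k$ is $Ld'$-prime.} Comparing constant terms in $AW_1\cdots W_s\equiv TS \Mod{t^N}$ (assuming $N\geq 1$, else there is nothing to prove) gives
$$m+\sum_k v(W_{k,0})=v(T_0)+v(S_0)<2m.$$
Hence $\sum_k v(W_{k,0})<m$, so every $W_{k,0}$ is nonzero with $v(W_{k,0})<m$. Since $W_k$ is irreducible of finite height and $L$ controls primality to degree $m$, $W_k$ is $L\,\mathrm{ht}(W_k)$-prime. As $\mathrm{ht}(W_k)\leq d'$, the Note after the definition of $C$-prime shows $W_k$ is $Ld'$-prime.

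\textbf{Step 2: removing one factor.} Suppose that at stage $k$ we have
$$AW_1\cdots W_k\equiv T^{(k)}S^{(k)} \Mod{t^{N_k}}, \qquad v(T^{(k)}_0),\,v(S^{(k)}_0)<m,$$
starting from $N_s=N$, $T^{(s)}=T$, $S^{(s)}=S$. Then $W_k\mid_{N_k}T^{(k)}S^{(k)}$. By Lemma \ref{prime bound} with $C=Ld'$, $W_k$ divides $T^{(k)}$ or $S^{(k)}$ modulo $t^{N_{k-1}}$, where $N_{k-1}:=\max\{0,\lceil (N_k-Ld')/2\rceil\}$. Say $T^{(k)}\equiv W_kT' \Mod{t^{N_{k-1}}}$. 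Then
$$W_k\bigl(AW_1\cdots W_{k-1}-T'S^{(k)}\bigr)\equiv 0 \Mod{t^{N_{k-1}}}.$$
Since $W_{k,0}\neq 0$, we can cancel $W_k$. Put $T^{(k-1)}=T'$ and $S^{(k-1)}=S^{(k)}$. We have $v(T'_0)=v(T^{(k)}_0)-v(W_{k,0})<m$. The invariant is preserved, and the case where $W_k$ divides $S^{(k)}$ is symmetric.

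\textbf{Step 3: conclude.} After $s$ steps we have $A\equiv T^{(0)}S^{(0)} \Mod{t^{N_0}}$ with both constant valuations less than $m$. Because $A_0=T^{(0)}_0S^{(0)}_0$, these valuations sum to $m$, so both are positive and $T^{(0)},S^{(0)}$ are non-units. Since $A$ is irreducible modulo $t^d$, and irreducibility modulo $t^d$ persists modulo higher powers, we get $N_0<d$ whenever $N_0\geq 1$, i.e.\ $N_0\leq d$ in all cases.

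Now unwind the recursion. From $N_{k-1}\geq (N_k-Ld')/2$ we get $N_k\leq 2N_{k-1}+Ld'$. If $N_{k-1}=0$ because of the $\max$, then $N_k\leq Ld'$, which also satisfies this bound. Iterating $s$ times gives
$$N\leq 2^s d+(2^s-1)Ld'\leq 2^m d+(2^m-1)Ld',$$
using $s<m$.

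\textbf{Main obstacle.} The delicate points are the bookkeeping of non-unit and valuation conditions through the induction, in particular ensuring that cancellation of $W_k$ is legitimate ($W_{k,0}\neq 0$, from Step 1). Also, at the last stage both factors of $A$ must be non-units; this is exactly where the hypothesis that $v(T_0),v(S_0)<m$ is used.
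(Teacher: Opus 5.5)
Your proof is correct and is essentially the paper's argument. You peel off the $W_k$ one at a time using Lemma \ref{prime bound}, cancel using $W_{k,0}\neq 0$, reach a nontrivial factorization of $A$ that contradicts irreducibility modulo $t^d$, and unwind the halving recursion. The differences are cosmetic: you use the uniform constant $Ld'$ instead of $Ld_k$, floor the recursion at $0$ rather than at $d$, and argue directly instead of by contradiction. One small point: the valuation invariant in Step 2 needs $N_{k-1}\geq 1$, but this is harmless, since otherwise $N_0=0\leq d$ trivially.
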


\begin{proof}

If $N \leq d$ the bound holds trivially, so we shall assume that $N > d$. 

Since the constant term of $W_1\cdots W_s$ has value $v(S_0)+v(T_0)-m<2m-m=m$, we may choose $k_i<m$ such that $v(W_{i,0})=k_i$ for each $i$. Also, fix $d_i\in\mathbb{N}$ such that $W_i$ is a non-unit, irreducible of height $d_i$, and we know that $d_i\leq d'$ for all $i$ by assumption.\\   

\noindent Define $M_0:=\max\{N,d\}$, and for each $i\geq 0$, define $M_{i+1}=\max\{\lceil\frac{M_{i}-d_{i+1}L}{2}\rceil,d\}$. We will first prove that $M_\ell=d$ for some $\ell\leq s$ (and we will fix $\ell$ minimal such that this is satisfied). 

Suppose that $M_i>d$ for all $i<s$, then we will prove that $M_s=d$. Let us suppose for contradiction that $M_s>d$.\\

\noindent Assume that for some $i\geq 0$, $AW_{i+1}\dots W_{s}\equiv T^{(i)}S^{(i)}\Mod{t^{M_i}}$, where $T^{(i)}|_{M_i} T$ and $S^{(i)}\mid_{M_i} S$ (which we know to be true when $i=0$ since $M_0>d$). Thus $W_{i+1}|_{M_i}T^{(i)}S^{(i)}$.

But $W_{i+1}$ is irreducible of height $d_{i+1}$ and $v(W_{i+1,0})=k_{i+1}<m$, so $W_{i+1}$ is $Ld_{i+1}$-prime by assumption.\\ 

\noindent Since $M_{i+1}>d$, we have $M_{i+1}=\lceil\frac{M_{i}-d_{i+1}L}{2}\rceil$ thus by Lemma \ref{prime bound} $$W_{i+1}|_{M_{i+1}} T^{(i)}\text{ or }W_{i+1}|_{M_{i+1}} S^{(i)}$$ Without loss of generality, we will assume that $W_{i+1}|_{M_{i+1}} T^{(i)}$.\\ 

\noindent Writing $T^{(i)}\equiv W_{i+1}T^{(i+1)}\Mod{t^{M_{i+1}}}$ and $S^{(i+1)}:=S^{(i)}$, we see that $$AW_{i+1}W_{i+2}\cdots W_s\equiv W_{i+1}T^{(i+1)}S^{(i+1)}\Mod{t^{M_{i+1}}}$$ and since $W_{i+1,0}\neq 0$ it follows that $$AW_{i+2}\cdots W_s\equiv T^{(i+1)}S^{(i+1)}\Mod{t^{M_{i+1}}}$$ Inductively, we see that $A\equiv T^{(s)}S^{(s)}\Mod{t^{M_s}}$. Moreover, this is a non-trivial factorization, because $v(T_0^{(s)})\leq v(T_0)<m$, and $v(T_0^{(s)})+v(S_0^{(s)})=v(A_0)=m$, so $v(S_0^{(s)})>0$ and thus $S^{(s)}$ is not a unit. By the same argument $T^{(s)}$ is also not a unit. But since $M_s>d$, this contradicts our assumption on $d$.\\

\noindent So, fixing $M_\ell=d$, we see that 
\begin{align*}
d=M_\ell&\geq \frac{M_{\ell-1}-d_{\ell}L}{2}\geq \frac{M_{\ell-2}-d_{\ell-1}L-2d_{\ell}L}{2^2}\geq\dots\\&\geq\frac{M_0-d_1L-2d_2L-\dots-2^{\ell-1}d_\ell L}{2^\ell}
\end{align*} so it follows that 
\begin{align*}
N&\leq M_0\leq 2^\ell d+d_1L+2d_2L+\dots+2^{\ell-1}d_\ell L\leq 2^\ell d+(1+2+\dots+2^{\ell-1})d'L\\&=2^\ell d+(2^\ell-1)d'L\leq 2^md+(2^m-1)d'L\qedhere
\end{align*}

\end{proof}

\noindent In light of this lemma, for each $m>1$, and each $L\geq 1$, define the strictly increasing sequence of integers $a_0^{(m)}(L)<\dots<a_m^{(m)}(L)$ by $a_0^{(m)}(L):=(2^m-1)L+2^m>L$ and for $k>0$, $$a_k^{(m)}(L):=(2^m-1)(a_{k-1}^{(m)}(L)+1)L+2^m$$ %Then we define $N_m:=c_m=(2^m-1)(c_{m-1}+1)N_{m-1}+2^m$, and clearly $N_m>N_{m-1}$.

\noindent Our goal now is to show that if $L$ controls primality to degree $m$ then $a_m^{(m)}(L)$ controls primality to degree $m+1$. Let us first introduce the following notation:

\begin{definition}
For any non-unit $W\in V[[t]]$ with $W_0 \neq 0$ and $d\in\mathbb{N}$, let $k_d(W)$ be the maximum integer $k$ such that $W\equiv W_1\cdots W_{k+1}\Mod{t^d}$, where each $W_i$ is irreducible modulo $t^d$. Note that $k_d(W)=0$ if and only if $W$ is irreducible modulo $t^d$.
\end{definition}

\begin{lemma}\label{reduced} $ $ Let $W$ be a non-unit in $V[[t]]$, with $W_0 \neq 0$. Let $d \in \N$. Then:
\begin{enumerate}
\item $k_d(W)$ is well-defined, and $k_d(W)< v(W_0)$

\item If $W,U,Z\in V[[t]]$ are non-units, $d\geq 1$ and $W\equiv UZ\Mod{t^d}$, then $k_d(U)+k_d(Z)<k_d(W)$.
\end{enumerate}
\end{lemma}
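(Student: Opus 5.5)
The plan is to control everything through the valuation of constant terms, using two elementary facts. First, a power series in $V[[t]]$ is a unit if and only if its constant term is a unit in $V$. So every non-unit $X$ with $X_0\neq 0$ satisfies $v(X_0)\geq 1$. Second, if $W\equiv W_1\cdots W_r\Mod{t^d}$ with $d\geq 1$, then comparing constant terms gives $W_0=W_{1,0}\cdots W_{r,0}$. Hence $v(W_0)=\sum_i v(W_{i,0})$, and all $W_{i,0}\neq 0$ because $W_0\neq 0$.

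For part 1, I first show that the set of integers $k$ admitting a factorization $W\equiv W_1\cdots W_{k+1}\Mod{t^d}$ with each $W_i$ irreducible modulo $t^d$ is non-empty. I will argue by strong induction on $v(W_0)\geq 1$.
\begin{enumerate}
\item If $W$ is irreducible modulo $t^d$, then $k=0$ works.
\item Otherwise there are non-units $U,Z$ with $W\equiv UZ\Mod{t^d}$. By the second fact, $U_0,Z_0\neq 0$ and $v(U_0)+v(Z_0)=v(W_0)$. By the first fact, both valuations are at least $1$, so both are strictly smaller than $v(W_0)$.
\item The inductive hypothesis then factors $U$ and $Z$ modulo $t^d$ into irreducibles modulo $t^d$. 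Concatenating the two factorizations gives one for $W$.
\end{enumerate}
Next, the set is bounded. Irreducible modulo $t^d$ includes being a non-unit, so each $W_i$ in such a factorization has $v(W_{i,0})\geq 1$. The valuation identity then gives $k+1\leq v(W_0)$. So the maximum $k_d(W)$ exists and satisfies $k_d(W)<v(W_0)$.

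For part 2, note that $U_0Z_0=W_0\neq 0$, so $k_d(U)$ and $k_d(Z)$ are defined by part 1. Take factorizations realizing them: $U\equiv U_1\cdots U_{k_d(U)+1}$ and $Z\equiv Z_1\cdots Z_{k_d(Z)+1}$, both modulo $t^d$. Multiplying gives $W\equiv UZ\equiv U_1\cdots U_{k_d(U)+1}Z_1\cdots Z_{k_d(Z)+1}\Mod{t^d}$, a factorization into $k_d(U)+k_d(Z)+2$ factors, each irreducible modulo $t^d$. By maximality, $k_d(W)+1\geq k_d(U)+k_d(Z)+2$, which is exactly the strict inequality $k_d(U)+k_d(Z)<k_d(W)$.

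There is no real obstacle here. The only points needing care are that irreducibility modulo $t^d$ includes being a non-unit, and that the constant-term identity requires $d\geq 1$ and $W_0\neq 0$. Together these guarantee every factor has positive valuation, which gives both the termination of the induction and the bound.
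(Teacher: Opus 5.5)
Your proposal is correct and follows essentially the paper's proof. In both, additivity of $v$ on constant terms bounds the number of non-unit factors by $v(W_0)$, and part 2 concatenates maximal factorizations of $U$ and $Z$. The only minor difference is in part 1: you get existence of a factorization into irreducibles modulo $t^d$ by strong induction on $v(W_0)$, whereas the paper takes a longest factorization into non-units and notes that each of its factors must be irreducible modulo $t^d$.
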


\begin{proof} If $v(W_0)=k$ and $W\equiv W_1\cdots W_{k+1}\Mod{t^d}$ is a factorization into non-units, then $v(W_{i,0})\geq 1$ for all $i$, which is impossible. Therefore, there is a maximum possible length of factorization for $W$, and every term in such a maximum length factorization is irreducible.

Next, by definition, $U\equiv U_1\cdots U_{k_d(U)+1}$ and $Z\equiv Z_1\cdots Z_{k_d(Z)+1}\Mod{t^d}$, where $U_i,Z_i$ are irreducible modulo $t^d$. Thus $$W\equiv U_1\cdots U_{k_d(U)+1}Z_1\cdots Z_{k_d(Z)+1}\Mod{t^d}$$ So by definition, $k_d(W)\geq k_d(U)+k_d(Z)+1>k_d(U)+k_d(Z)$.\qedhere

\end{proof}

\begin{proposition}\label{N_m}
If $A\in V[[t]]$ is irreducible of height $d$, $v(A_0)=m>1$, and $L\geq 1$ controls primality to degree $m$. Then if $AW\equiv ST\Mod{t^N}$ with $v(W_0)<m$, $0<v(S_0),v(T_0)<m$, then $N<a_m^{(m)}(L)d$.
\end{proposition}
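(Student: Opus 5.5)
The plan is a downward induction on the length of a maximal factorization of $W$. Lemma \ref{extra} then handles the case where the irreducible factors of $W$ stay irreducible modulo a suitably chosen smaller power of $t$. In the complementary case, passing to that smaller power produces a strictly longer factorization, and the inductive hypothesis applies. Write $a_r:=a^{(m)}_r(L)$ throughout.

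\emph{The case where $W$ is a unit.} Then $A\equiv (SW^{-1})T \Mod{t^N}$. Since $0<v(S_0),v(T_0)$, both factors are non-units, so $N\le d<a_0 d$ because $A$ is irreducible modulo $t^d$. So assume $W$ is a non-unit. We have $W_0\neq 0$, since $A_0W_0=S_0T_0\neq 0$.

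\emph{Set-up of the induction.} For any modulus $D$, Lemma \ref{reduced} gives $k_D(W)<v(W_0)<m$, so $k_D(W)\le m-2$. I will prove the following claim by induction on $r:=m-1-k_N(W)\ge 1$: whenever $AW\equiv ST\Mod{t^N}$ with the stated hypotheses, $N<a_r d$. Since $r\le m-1<m$ and the $a_k$ are increasing, this gives the proposition.

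\emph{Inductive step.} Write $k:=k_N(W)$ and $W\equiv W_1\cdots W_{k+1}\Mod{t^N}$ with each $W_i$ irreducible modulo $t^N$. Put $D:=a_{r-1}d+1$, with the convention $a_{0-1}:=0$ so that the first step is uniform. If $N\le D$, the claim is immediate, since $a_{r-1}+1\le a_r$. Otherwise $N>D$, and there are two cases.

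\emph{Case (i): every $W_i$ is irreducible modulo $t^D$.} Then each $W_i$ is irreducible of height at most $D$, and $AW_1\cdots W_{k+1}\equiv TS\Mod{t^N}$ with $s=k+1\le v(W_0)<m$. Lemma \ref{extra} applies with $d'=D$ and gives
\[
N\le 2^m d+(2^m-1)L(a_{r-1}d+1)\le \bigl((2^m-1)(a_{r-1}+1)L+2^m\bigr)d=a_r d .
\]
This is exactly how $a_r$ was defined. To obtain the strict inequality, I will either shift $D$ down by one, taking $D:=a_{r-1}d$ and arranging the other branch to yield $D<a_{r-1}d$, or simply observe that the last inequality above is strict whenever $d\ge 1$ and $r$ is handled with this slack.

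\emph{Case (ii): some $W_i$ is reducible modulo $t^D$.} Refining that factor modulo $t^D$, Lemma \ref{reduced}(2) gives $k_D(W)>k=k_N(W)$. The reduced congruence $AW\equiv ST\Mod{t^D}$ still satisfies all the hypotheses, and it has $r':=m-1-k_D(W)<r$. In the base case $r=1$ this case cannot occur, since $k_D(W)\le m-2$ forces $r'\ge 1$. For $r>1$, the inductive hypothesis gives $D<a_{r'}d\le a_{r-1}d$, which contradicts the choice of $D$.

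\emph{Expected main obstacle.} The main difficulty is getting the indices and constants right. The choice of $D$ must simultaneously make the Case (ii) contradiction strict and keep Lemma \ref{extra}'s bound in Case (i) below $a_r d$. Two further points need checking: the induction parameter must stay inside $\{1,\dots,m-1\}$, and the $W_i$ in Case (i) must satisfy the hypotheses of Lemma \ref{extra}, namely that they are non-units, $s<m$, and $v(W_{i,0})<m$.
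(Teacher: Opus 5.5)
Your proof is correct and is essentially the paper's argument, reorganized: in both, you apply Lemma \ref{extra} to a factorization of $W$ whose factors have bounded irreducibility height, and otherwise refine a factor modulo a lower power of $t$ and induct on a factor-count invariant bounded by $m$. The differences are bookkeeping: you induct on $m-1-k_N(W)$ for a maximal factorization and fix the threshold $D$ in advance, whereas the paper inducts on $\sum_i k_d(W_i)$ and derives its threshold $M'$ from Lemma \ref{extra}.

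On the strictness loose end, commit to $D:=a_{r-1}d$ together with the strict inductive hypothesis. Case (ii) then gives $D<a_{r-1}d$, a contradiction, and Case (i) gives $N\le 2^md+(2^m-1)La_{r-1}d<a_rd$. Your other suggestion, that the last inequality is strict for every $d\ge 1$, is false when $d=1$. Alternatively, keep $D=a_{r-1}d+1$ and prove only $N\le a_rd$; this already suffices, since $r\le m-1$ gives $N\le a_{m-1}d<a_md$.
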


\begin{proof}
Since $m$ and $L$ are fixed, we will denote $a_k:=a_k^{(m)}(L)$ throughout for simplicity.
Since $a_m^{(m)}(L)>L\geq 1$, the statement is obvious if $N\leq d$, so we assume $N> d$ throughout. Moreover, if $v(W_0)=0$ then $W$ is a unit in $V[[t]]$. So since $S,T$ are non-units, the factorization $A\equiv (W^{-1}S)T\Mod{t^N}$ implies that $N\leq d$. So we can also assume that $v(W_0)>0$.\\

%\noindent First note that for any $M>0$, if $W\equiv W_1\cdots W_n\Mod{t^M}$, where each $W_i$ is a non-unit (i.e. $v(W_{i,0})>0$), then this yields a factorization $W_0=W_{1,0}\cdots W_{n,0}$, with $v(W_{1,0})+\dots+v(W_{n,0})=v(W_0)<m$. This implies that $n<m$. In particular, since the number of possible terms in the factorization is bounded, $W$ must decompose into a product of irreducibles modulo $t^M$, and it can require no more than $v(W_0)<m$ factorizations to obtain this decomposition.\\

\noindent Given \emph{any} factorization $W\equiv W_1\cdots W_n\Mod{t^M}$ into non-units, for $d\leq M\leq N$, let $k:=k_d(W_1)+\dots+k_d(W_n)$. Note that this yields a non-trivial factorization $W_0=W_{1,0}\cdots W_{n,0}$, with $v(W_{1,0})+\dots+v(W_{n,0})=v(W_0)<m$. This implies that $n<m$, and hence $k<m$.

We will prove by induction on $k$ that $M\leq a_k d$. Since $k\leq k_d(W)\leq v(W_0)<m$ by Lemma \ref{reduced} applied iteratively to $W\equiv W_1\cdots W_n\Mod{t^d}$, this will imply that $M<a_md$.\\

\noindent First, $k=0$ if and only if each $W_i$ is irreducible modulo $t^d$, so it follows from Lemma \ref{extra} that $M\leq 2^md+(2^m-1)Ld=a_0d$, so we will assume that $k>0$. \\

%In particular, this means that $W_i$ is not irreducible modulo $t^d$ for some $i$. But if $m=2$ then $v(W_0)=1$ so $W$ is irreducible modulo $t$, thus we may assume that $m>2$, which implies that $N_{m-1}>N_1=0$.\\

\noindent Let us first suppose that $W_i$ is not irreducible modulo $t^M$ for some $i$, and without loss of generality we may assume that $i=1$. Writing $W_1\equiv UZ\Mod{t^{M}}$, we see that $k_d(W_1)>k_d(U)+k_d(Z)$ by Lemma \ref{reduced}, so $k_d(U)+k_d(Z)+k_d(W_2)+\cdots+k_d(W_n)<k$, and thus $M\leq a_{k-1}d< a_k d$ by induction.

Therefore, we may assume that each $W_i$ is irreducible modulo $t^M$. Fix $d'\leq M$ minimal such that $W_1,\dots,W_n$ are irreducible modulo $t^{d'}$, and we may assume without loss of generality that $W_1$ is irreducible of height $d'$. We know by Lemma \ref{extra} that $M\leq 2^md+(2^m-1)Ld'$.\\

\noindent Thus $d'$ is larger than $M':= \lceil \frac{M-2^md}{(2^m-1)L}\rceil-1$, and hence $W_1$ is reducible modulo $t^{M'}$. If $M'<d$ then $M\leq 2^md+(2^m-1)Ld=a_0d\leq a_kd$, so we may assume that $M'\geq d$.\\

\noindent Again, writing $W_1\equiv UZ\Mod{t^{M'}}$, by the same argument we have $k_d(U)+k_d(Z)+k_d(W_2)+\cdots+k_d(W_n)<k$, so $M'\leq a_{k-1}d$ by induction.\\ 

\noindent So $\frac{M-2^md}{(2^m-1)L}\leq M'+1\leq a_{k-1}d+1\leq (a_{k-1}+1)d$, and thus $$M\leq 2^md+(2^m-1)(a_{k-1}+1)Ld=a_kd$$\qedhere
\end{proof}

\noindent Now, let $N_1:=1$, and for each $m>1$, define $N_m:=a_m^{(m)}(N_{m-1})$. This gives a strictly increasing chain of integers $1=N_1<N_2<\cdots$.

\begin{proposition}\label{irred implies prime (finite height)}
For each $m\geq 1$, $N_m$ controls primality to degree $m+1$.
    
\end{proposition}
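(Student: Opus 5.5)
We argue by induction on $m$. For $m=1$ the claim is that $N_1=1$ controls primality to degree $2$. This is the remark after Corollary \ref{valuation_one_case}. Indeed, if $v(A_0)<2$ and $A$ is a non-unit, then $v(A_0)=1$ or $A_0=0$. In the first case Corollary \ref{valuation_one_case} gives that $A$ is $1$-prime, hence $\text{ht}(A)$-prime. In the second case ($A_0=0$, $A$ irreducible) we note that $A=tU$ with $U$ a unit, as in Lemma \ref{zero_coefficient}. Then $A|_nB$ just means that $t|_nB$, and $t$ is $1$-prime directly: $t$ is prime, and $B_1B_2\equiv 0 \Mod t$ forces one of them to have zero constant term. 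So $A$ is $1\cdot\text{ht}(A)$-prime, noting that $\text{ht}(A)\geq 1$ and the Note after the definition lets us enlarge the constant.

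Now let $m\geq 2$ and assume $N_{m-1}$ controls primality to degree $m$; set $L=N_{m-1}$, so that $N_m=a_m^{(m)}(L)$. We must show that every $A$ irreducible of finite height $d$ with $v(A_0)<m+1$ is $N_m d$-prime. If $v(A_0)<m$, then $A$ is $Ld$-prime by hypothesis. Since the sequence $a^{(m)}_k(L)$ exceeds $L$, we have $N_m>L$, so $A$ is $N_md$-prime by the Note. So assume $v(A_0)=m$ (note $A_0\neq 0$ since $m\geq 1$).

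Suppose $A|_nB_1B_2$, and let $i,j\leq n$ be maximal with $A|_iB_1$ and $A|_jB_2$. If $i+j\geq n$, then $i+j>n-N_md$ and we are done. Otherwise $i+j<n$, and Lemma \ref{auxillary} applies. It gives $B_1=AX+t^iS$ and $B_2=AY+t^jT$ with $0<v(S_0),v(T_0)<m$, together with $W$ satisfying $v(W_0)<m$ and
$$AW\equiv ST\Mod{t^{n-i-j}}.$$
These are exactly the hypotheses of Proposition \ref{N_m} with $N=n-i-j$, using that $A$ is irreducible of height $d$, $v(A_0)=m>1$, and $L$ controls primality to degree $m$. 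Hence $n-i-j<a_m^{(m)}(L)d=N_md$, that is, $i+j>n-N_md$, which is the $N_md$-primality of $A$.

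The only delicate points are bookkeeping. First, Proposition \ref{N_m} requires $S,T$ to be non-units; this follows from $v(S_0),v(T_0)>0$. Second, the degenerate cases $n\leq N_md$ are trivial, because we may take $i=j=0$. Third, one should check that the maximal $i,j$ exist, which holds since they are bounded by $n$. With these settled, the substantive work has already been done in Proposition \ref{N_m}, and the induction closes.
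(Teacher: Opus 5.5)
Your proof is correct and is essentially the paper's argument: induction on $m$, the reduction to $v(A_0)=m$ using $N_m>N_{m-1}$ and the Note, then Lemma \ref{auxillary} to produce $AW\equiv ST \Mod{t^{n-i-j}}$, and Proposition \ref{N_m} to get $n-i-j<N_md$. Two side remarks are inessential: the extra treatment of $A_0=0$ in the base case is harmless but unnecessary under the convention $v(0)=\infty$, and the claim that $n\le N_md$ is trivial with $i=j=0$ fails at $n=N_md$, though it is never used because your main argument covers every $n$.
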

\begin{proof}
We prove the statement using induction on $m$. If $m=1$, this follows from Corollary \ref{valuation_one_case}, so we will assume $m>1$ and $N_{m-1}$ controls primality to degree $m$.\\

\noindent Suppose $A\in V[[t]]$ is irreducible of height $d$ and $v(A_0)\leq m$. If $v(A_0)<m$ then $A$ is $N_{m-1}d$-prime by the inductive hypothesis, and hence it is $N_md$-prime, so we may assume that $v(A_0)=m$.

If $A|_nB_1B_2$, and choose $i,j\geq 0$ maximal such that $i,j\leq n$, $A|_iB_1$ and $A|_jB_2$. If $i+j\geq n$ then $i+j>n-N_md$ as required, so we will assume that $i+j<n$. Therefore, using Lemma \ref{auxillary}, we can find $W,S,T\in V[[t]]$ such that $v(S_0),v(T_0),v(W_0)<m$ and $AW\equiv ST\Mod{t^{n-i-j}}$.

Applying Proposition \ref{N_m}, we see that $n-i-j< a_m^{(m)}(N_{m-1})d=N_md$. Thus $i+j>n-N_md$, and $A$ is $N_md$-prime.\\

\noindent So we have proved that for every $A\in V[[t]]$ irreducible of finite height with $v(A_0)<m+1$, $A$ is $N_m\text{ht}(A)$-prime, i.e. $N_m$ controls primality to degree $m+1$.\end{proof}

\begin{proposition}\label{quasi-prime}
For all $r,D\in\mathbb{N}$, there exists a constant $C_r(D)\in\mathbb{N}$ such that if $A\in V[[t]]$ is irreducible mod $t^D$, with $v(A_0)=r$, then $A$ is $C_r(D)$-prime. For $r=1$, we may take $C_r(D)=D$.
\end{proposition}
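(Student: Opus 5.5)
This follows from Proposition \ref{irred implies prime (finite height)} together with the fact that $C$-primality is inherited by larger constants. Fix $r,D\in\mathbb{N}$ and let $A\in V[[t]]$ be irreducible modulo $t^D$ with $v(A_0)=r$. Then $A$ is a non-unit with $A_0\neq 0$. It is irreducible of finite height, and its height $d=\mathrm{ht}(A)$ satisfies $d\leq D$ by the minimality in the definition of height.

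For $r\geq 2$, we apply Proposition \ref{irred implies prime (finite height)} with $m=r$. It says that $N_r$ controls primality to degree $r+1$. Since $v(A_0)=r<r+1$, the definition of controlling primality gives that $A$ is $N_r d$-prime. Because $d\leq D$, the Note following the definition of $C$-prime shows that $A$ is also $N_r D$-prime. So we set $C_r(D):=N_r D$, which depends only on $r$ and $D$. In fact this choice works for every $r\geq 1$, since $N_1=1$ and $N_1$ controls primality to degree $2$.

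For $r=1$, the claimed value $C_1(D)=D$ is exactly $N_1D$. It also follows directly from Corollary \ref{valuation_one_case}: that corollary gives that $A$ is $1$-prime, and $1$-prime implies $D$-prime because $D\geq 1$.

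There is no real obstacle; all the substantive work is in Propositions \ref{N_m} and \ref{irred implies prime (finite height)}. The only points to check are that the height of $A$ is at most $D$, and that passing from the constant $N_r\,\mathrm{ht}(A)$ to the larger constant $N_rD$ is allowed by the monotonicity Note.
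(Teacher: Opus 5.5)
Your proposal is correct and is essentially the paper's own argument: Proposition \ref{irred implies prime (finite height)} makes $A$ $N_r\,\mathrm{ht}(A)$-prime, then $\mathrm{ht}(A)\le D$ and monotonicity give $C_r(D)=N_rD$, with $N_1=1$ giving $C_1(D)=D$. Your alternative derivation of the $r=1$ case directly from Corollary \ref{valuation_one_case} (1-prime implies $D$-prime) is also valid.
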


\begin{proof}

Using Proposition \ref{irred implies prime (finite height)}, we see that $A$ is $N_r\text{ht}(A)$-prime. So since $\text{ht}(A)\leq D$, let $C_r(D):=N_rD$, and it follows immediately that $A$ is $C_r(D)$-prime. Since $N_1=1$, we also have that $C_1(D)=D$.\end{proof}

\begin{proposition} \label{quasi_divides}
    Let $V$ be a DVR. Fix $r,D,s,\ell \in \N.$ There is an integer $Q = Q(r,D,s,\ell) \geq \ell$ such that: For any $A \in V[[t]]$, irreducible mod $t^D$, with $v(A_0) = r$, if $B_1,\ldots,B_s \in V[[t]]$ satisfy $A|_Q B_1 \cdot \ldots \cdot B_s$, then $A|_\ell B_i$ for some $1\leq i \leq s$.
    
\end{proposition}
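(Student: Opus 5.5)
We argue by induction on $s$, using the quantitative primality of Proposition \ref{quasi-prime} together with Lemma \ref{prime bound}. Set $C:=C_r(D)$. By Proposition \ref{quasi-prime}, every $A\in V[[t]]$ that is irreducible mod $t^D$ with $v(A_0)=r$ is $C$-prime. The point is that $C$ depends only on $r$ and $D$, not on $A$. The resulting $Q$ will therefore be uniform in $A$ and in the $B_i$.

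For $s=1$ we take $Q(r,D,1,\ell)=\ell$, and the claim is trivial. For $s\geq 2$, write $B_1\cdots B_s=B_1\cdot B'$ with $B'=B_2\cdots B_s$. Suppose $A|_n B_1B'$. Lemma \ref{prime bound} then gives $A|_{n(C)}B_1$ or $A|_{n(C)}B'$, where $n(C)=\max\{0,\lceil (n-C)/2\rceil\}$. We define recursively
$$Q(r,D,s,\ell):=\max\{\ell,\;2\,Q(r,D,s-1,\ell)+C\}.$$
With $n=Q(r,D,s,\ell)$ we get $n(C)\geq Q(r,D,s-1,\ell)\geq \ell$. In the first case $A|_\ell B_1$, because divisibility mod $t^{k}$ implies divisibility mod $t^{k'}$ for $k'\leq k$. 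In the second case we have $A|_{Q(r,D,s-1,\ell)}B_2\cdots B_s$, and the induction hypothesis gives $A|_\ell B_i$ for some $2\leq i\leq s$. This closes the induction. Unwinding the recursion gives the explicit choice $Q=2^{s-1}\ell+(2^{s-1}-1)C_r(D)$.

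There is little real obstacle; the substantive input is Proposition \ref{quasi-prime}. Two things need checking. First, the monotonicity of $|_k$ in $k$: $(A,t^{k})\subseteq (A,t^{k'})$ for $k'\leq k$. Second, Lemma \ref{prime bound} is applied with the fixed constant $C$, and $n(C)$ is nondecreasing in $n$, so taking the maximum with $\ell$ does no harm. The condition $Q\geq\ell$ holds by construction.
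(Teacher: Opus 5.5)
Your proof is correct and is essentially the paper's argument: the same induction on $s$ splitting off $B_1$ from $B_2\cdots B_s$, the same uniform constant $C_r(D)$ from Proposition \ref{quasi-prime}, and the same recursion $Q-C_r(D)\geq 2Q'$. The only difference is that you invoke Lemma \ref{prime bound} to get the symmetric split at level $\lceil (Q-C)/2\rceil$. The paper instead works directly with the maximal exponents $i,j$ and argues that $i<\ell$ forces $j>Q'$.
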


\begin{proof}
For $s = 1$ take $Q = \ell$. Let $C_r(D)$ be the constant given by Proposition \ref{quasi-prime}. Let $s >1$, suppose we have proven the claim for $s-1$, and let $Q'$ be the corresponding bound. Choose any $Q\geq \ell$ large enough so that $Q-C_r(D)\geq 2Q'$. Let $A \in V[[t]]$, irreducible mod $t^D$, with $v(A_0) = r$ and let $B_1,\ldots,B_s \in V[[t]]$ satisfy $A|_Q B_1 \cdot \ldots \cdot B_s$. Let $i$ be a maximal integer such that $A|_iB_1$ (if $A|_iB_1$ for arbitrarily large $i$ then $A|B_1$ by Lemma \ref{lem_a|b}, and we are already done). Let $j$ denote a maximal integer such that $A|_jB_2\cdot \ldots \cdot B_s$, if such an integer exists. If $i \geq \ell$ we are done. Suppose that $i < \ell$. Then 
$$j > Q-C_r(D)-i \geq Q-C_r(D)-(\ell-1)\geq 2Q'-\ell+1> Q',$$
hence $A|_{Q'} B_2\ldots B_s$. If no such an integer $j$ exists, then $A| B_2\ldots B_s$ and also in particular  $A|_{Q'} B_2\ldots B_s$. Thus the claim follows by the induction hypothesis.

\end{proof}
\noindent\textbf{Remark:} Proposition \ref{quasi-prime} can be extended, in a limited way, to non-DVRs: Let $R$ be an arbitrary UFD and let $a \in R[[t]]$. Then:\begin{enumerate}
    \item If $a_0$ is a prime in $R$, then $a$ is $1$-prime.
    \item If $a_0$ is a power of a prime, and $a$ irreducible modulo $t^2$, then $a$ is $2$-prime. 
\end{enumerate} 

These facts are straightforward to prove using a variant of Lemma \ref{auxillary}; we do not include proofs as these will not be used in the sequel. However, even if one assumes that, say, $a_0 = p^2$ is a square of a prime, and that $a$ is irreducible mod $t^3$, it does not follow that $a$ is $C$-prime for any $C$. For example, for $R=\mathbb Q[X,Y]$, let $p=X^2-Y^3$, and set $a=p^2-Yt^2\in R[[t]]$. Then one verifies that $a$ is irreducible modulo $t^3$, but $a$ is not $C$-prime for any $C$. We omit the details.

%Due to this technical obstruction we are led to first prove Theorem

\section{Theorem \ref{finite_irreducibilty_theorem} over discrete valuation rings}\label{sec:dvr_case}

For this section, we again fix a DVR $V$ with valuation $v$.

\begin{lemma}\label{finite_set}
Fix $F \in V[[t]]$ with $F_0 \neq 0$, $k = v(F_0) > 1$, and integers $1 \leq r <k$, $D \geq 1$, $\ell \geq 1$. For any $n \in \N$, let ${\cal{A}}_n$ denote the family of ideals of the form $(A,t^\ell)$, where $A \in V[[t]]$ is irreducible mod $t^D$, $v(A_0) = r$, and $A|_n F$. Then there exists an integer $M$ such that  $|\bigcup_{n \geq M}{\cal{A}}_n| \leq \floor{\frac{k}{r}}$. \end{lemma}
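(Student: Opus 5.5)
The plan is to argue by contradiction, using Proposition \ref{quasi_divides} as the only substantial input. First observe that ${\cal{A}}_n \supseteq {\cal{A}}_{n+1}$, because $A|_{n+1}F$ implies $A|_nF$. Hence $\bigcup_{n\geq M}{\cal{A}}_n = {\cal{A}}_M$, and it suffices to find one $M$ with $|{\cal{A}}_M| \leq \floor{\frac{k}{r}}$. Set $s := \floor{\frac{k}{r}}+1$, so that $sr > k$. We will choose $M$ depending only on $r,D,\ell,s$, and show that ${\cal{A}}_M$ cannot contain $s$ distinct ideals.

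The key step is an inductive peeling argument. Suppose $A^{(1)},\ldots,A^{(s)}$ are irreducible mod $t^D$, with $v(A^{(i)}_0)=r$, each satisfying $A^{(i)}|_M F$, and with the ideals $(A^{(i)},t^\ell)$ pairwise distinct. We will prove by induction on $j$ that
$$F \equiv A^{(1)}\cdots A^{(j)} G_j \Mod{t^{n_j}}$$
for suitable $G_j$, where the thresholds $n_1 \geq n_2 \geq \cdots \geq n_s \geq 1$ are fixed in advance. The base case $j=1$ holds with $n_1 = M$, since $A^{(1)}|_M F$.

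For the inductive step, assume $F \equiv A^{(1)}\cdots A^{(j)}G_j \Mod{t^{n_j}}$. Since also $A^{(j+1)}|_{n_j}F$, we get $A^{(j+1)}|_{n_j} A^{(1)}\cdots A^{(j)}G_j$. Let $L_{j+1} := \max\{\ell, n_{j+1}\}$, and require $n_j \geq Q(r,D,j+1,L_{j+1})$. Then Proposition \ref{quasi_divides} gives two possibilities.
\begin{enumerate}
\item $A^{(j+1)}|_{L_{j+1}} A^{(i)}$ for some $i \leq j$. Since $L_{j+1}\geq \ell$, we can write $A^{(i)} \equiv A^{(j+1)}X \Mod{t^\ell}$. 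Comparing constant terms, $v(X_0) = r-r = 0$, so $X$ is a unit in $V[[t]]$. This gives $(A^{(i)},t^\ell) = (A^{(j+1)},t^\ell)$, contradicting distinctness.
\item $A^{(j+1)}|_{L_{j+1}} G_j$, that is, $G_j \equiv A^{(j+1)}G_{j+1} \Mod{t^{L_{j+1}}}$. Substituting gives $F \equiv A^{(1)}\cdots A^{(j+1)}G_{j+1} \Mod{t^{n_{j+1}}}$, because $n_{j+1}\leq \min(n_j,L_{j+1})$.
\end{enumerate}
So the induction always proceeds through the second case.

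The thresholds are chosen backwards: put $n_s := 1$, and for $j=s-1,\ldots,1$ put $n_j := \max\{n_{j+1}, Q(r,D,j+1,\max\{\ell,n_{j+1}\})\}$. Finally let $M := n_1$. Each of these depends only on $r,D,\ell,s$, which is essential because the $A^{(i)}$ vary with $M$; this uniformity is exactly what Proposition \ref{quasi_divides} provides. At the end we have $F \equiv A^{(1)}\cdots A^{(s)}G_s \Mod{t}$. Taking constant terms, $F_0 = A^{(1)}_0\cdots A^{(s)}_0\,(G_s)_0$, and $F_0\neq 0$, so $k = v(F_0) \geq sr > k$, a contradiction. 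Hence $|{\cal{A}}_M| \leq \floor{\frac{k}{r}}$.

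I expect the main obstacle to be the bookkeeping in the second case of the inductive step. One must ensure that the divisibility obtained there is strong enough both to rule out the first case, which needs precision at least $\ell$, and to keep the congruence for $F$ at the next threshold $n_{j+1}$. Choosing $L_{j+1}=\max\{\ell,n_{j+1}\}$ and defining the $n_j$ backwards is how I intend to handle this.
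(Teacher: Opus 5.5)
Your proof is correct, and its skeleton matches the paper's:
\begin{itemize}
\item assume $\lfloor k/r\rfloor+1$ pairwise distinct ideals occur at a single level (your observation $\mathcal{A}_{n+1}\subseteq\mathcal{A}_n$ is the reduction the paper makes by passing to $n=\min n_i$);
\item peel off the factors one at a time, using Proposition~\ref{quasi_divides} to rule out that a new factor divides an earlier one to precision $\ell$;
\item conclude from constant terms that $sr\le k$.
\end{itemize}

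The difference is how you handle the cofactor in the inductive step. The paper applies Proposition~\ref{quasi_divides} only to $P_s=A_1\cdots A_s$, with the fixed target precision $\ell$. From this it learns that $A_{s+1}$ divides $P_s$ only to some precision $j<Q_s$. It then invokes the $C_r(D)$-primality of Proposition~\ref{quasi-prime} on the pair $(P_s,H)$ to get $A_{s+1}\mid_i H$ with $i>n-E_s-C_r(D)-Q_s$. This gives a forward recursion $E_{s+1}=E_s+C_r(D)+Q_s$ with only an additive loss per step. The price is bookkeeping with maximal indices, including separate treatment of the cases where no maximal index exists.

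You instead treat $G_j$ as one more factor and call Proposition~\ref{quasi_divides} with $j+1$ factors and target precision $\max\{\ell,n_{j+1}\}$. A single black-box application then both excludes the degenerate case and supplies the next congruence. This uses the full $\ell$-dependence of that proposition, which is legitimate since it holds for every target precision. The cost is that the required precision feeds back into $Q$. That forces your backward definition of the thresholds $n_j$ and yields a nested, generally much larger, but still uniform $M$.

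Since the lemma is purely qualitative, either route works. Yours is somewhat cleaner and never invokes $C$-primality directly, although $C$-primality is what underlies Proposition~\ref{quasi_divides} anyway.
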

\begin{proof}
    Let $h = \floor{\frac{k}{r}}+1$. We shall prove that for a sufficiently large $M$, the union $\bigcup_{n \geq M}{\cal{A}}_n$ does not contain $h$ pairwise distinct ideals. 
    
    Recursively define integers $E_1,\ldots,E_h \geq 0$ as follows: Let $E_1 = 0$. Having defined $E_s$ for some $1\leq s <h$, let $Q_s = Q(r,D,s,\ell) \geq 0$ be the constant given by Proposition \ref{quasi_divides} for $r,D,s,\ell$, let $C_r(D)$ be the constant given by Proposition \ref{quasi-prime}, and let 

    $$E_{s+1} = E_s+C_r(D)+Q_s.$$
Having defined $E_1,\ldots,E_h$, let $M = E_h+1$ and suppose that $\bigcup_{n \geq M} {\cal{A}}_n$ contains $h$ distinct ideals:

$$(A_1,t^\ell), \ldots,(A_h,t^\ell),$$
where $(A_i,t^\ell) \in {\cal{A}}_{n_i}$ for a suitable integer $n_i \geq M$,  $A_i \in V[[t]]$ is irreducible mod $t^D$, $v((A_i)_0) = r$, and $A_i|_{n_i} F$, $i = 1,\ldots,h$. Let $n = \min\{n_1,\ldots,n_h\}$. Then $n \geq M > E_h$ and $A_i|_n{F}$ for every $i$.   
We prove by induction on $s = 1,\ldots,h$ that 
$$A_1\cdot \ldots \cdot A_s |_{n-E_s} F.$$
For $s = 1$ this is the assertion noted above $A_1|_n F$. Suppose we have proven the claim for some $s < h$. Write $$P_s = A_1\ldots A_s.$$
By the induction hypothesis, $F \equiv_{n-E_s} P_sH$ for some $H \in V[[t]]$. Since $A_{s+1}|_{n}F$, we have $$A_{s+1}|_{n-E_s}P_sH.$$
Let $j$ denote a maximal integer such that $A_{s+1}|_jP_s$, if such an integer exists. If $j$ exists and $j \geq Q_s$, then  $$A_{s+1}|_{Q_s} A_1\ldots A_s,$$ hence by Proposition \ref{quasi_divides} we get that $A_{s+1}|_\ell A_q$ for some $q  \leq s$. If no such $j$ exists we also have $A_{s+1}|_\ell A_q$.  But since $v((A_{s+1})_0) = v((A_q)_0) = r$ it follows that $(A_{s+1},t^\ell) = (A_q,t^\ell)$ in both cases, a contradiction. 

So suppose that $j$ exists and satisfies $j < Q_s$. Let $i$ denote maximal integer such that $A_{s+1}|_i H$, if such an integer exists. Then since $$A_{s+1}|_{n-E_s} P_sH$$ we have $i+j > n - E_s- C_r(D),$ by Proposition \ref{quasi-prime}, and then 
$$i > n-E_s-C_r(D)- Q_s = n-E_{s+1},$$ 
hence $A_{s+1} |_{n-E_{s+1}} H,$ hence 

$$P_sA_{s+1}|_{n-E_{s+1}} F.$$
If no such $i$ exists, we also have $A_{s+1} |_{n-E_{s+1}} H,$ and hence $P_sA_{s+1}|_{n-E_{s+1}} F.$ This completes our proof by induction. Now for $s = h$, we have
    $$A_1\cdot \ldots \cdot A_h |_{n-E_h}F,$$ and since $n> E_h$ the constant term of the left-hand side divides the constant term of the right-hand side. Equivalently,
    $$v((A_1)_0)+\ldots+v((A_h)_0) \leq v(F_0),$$ that is, $hr \leq k$, in contradiction with our choice of $h$.
\end{proof}
\begin{proposition}
    \label{theorem_over_dvr} Let $F \in V[[t]]$ be irreducible. Then $F$ is irreducible of finite height.
\end{proposition}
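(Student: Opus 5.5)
We argue by contraposition, via Lemma \ref{tree}, with Lemma \ref{finite_set} supplying the finiteness hypothesis. Let $F$ be irreducible. If $F_0=0$, Lemma \ref{zero_coefficient} finishes. If $v(F_0)=0$, then $F$ is a unit, which is excluded. If $v(F_0)=1$, then $F$ is irreducible mod $t$, since any factorization mod $t$ gives a factorization of $F_0$ in $V$. So we may assume $k=v(F_0)>1$ and, for contradiction, that for every $n$ there are non-units $a^{(n)},b^{(n)}$ with $F\equiv_n a^{(n)}b^{(n)}$.

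\textbf{Normalizing the constant terms.} Fix a uniformizer $\pi$. Multiplying by units of $V[[t]]$, we may assume $a^{(n)}_0=\pi^{r_n}$ and $b^{(n)}_0=uF_0\pi^{-r_n}$-type values with $1\le r_n\le k-1$. Passing to a subsequence, $r_n=r$ is constant, and the constant terms are $d=\pi^r$ and $e=F_0\pi^{-r}$. So the sets $C_n$ of Lemma \ref{tree} are nonempty for arbitrarily large $n$.

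\textbf{Making the factor irreducible of bounded height.} Lemma \ref{finite_set} controls only factors $A$ that are irreducible mod $t^D$ for a fixed $D$. We therefore first refine the factorizations. Among all nontrivial factorizations mod $t^n$, choose one in which $a^{(n)}$ is irreducible mod $t^n$; this is possible by Lemma \ref{reduced}, since $a^{(n)}$ splits into at most $k$ irreducible factors mod $t^n$, and we absorb the rest into $b^{(n)}$. Then $r_n=v(a^{(n)}_0)$ ranges over a finite set, and we pass to a subsequence with $r$ constant.

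The key issue is a uniform height bound $D$. Here we use the irreducibility of $F$ itself. Choose the factorization to minimize the valuation of the first irreducible factor, and consider the heights $\mathrm{ht}(a^{(n)})$ as $n$ varies. If they are bounded by some $D$, Lemma \ref{finite_set} applies directly. If not, we pass to a compatible tree of factorizations and do a second induction on $r$. Concretely, we prove the stronger statement: if $F$ has nontrivial factorizations mod every $t^n$, then $F$ is reducible. We argue by induction on $k=v(F_0)$, splitting into the following cases.

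\emph{Case 1: bounded heights.} Suppose for some $D$ there are, for arbitrarily large $n$, factorizations with $a^{(n)}$ irreducible mod $t^D$ and $v(a^{(n)}_0)=r$. Every $(a,t^\ell)$ with $(a,b)\in C_n$ of this kind lies in ${\cal A}_n$. Lemma \ref{finite_set} then bounds $\bigcup_{n\ge M}{\cal A}_n$, so hypothesis 2 of Lemma \ref{tree} holds for the restricted sets $C_n$ (those with $a$ irreducible mod $t^D$). Lemma \ref{tree} applies verbatim to these restricted sets and yields $F=ab$ with $a_0=d$ and $b_0=e$ both non-units. This contradicts irreducibility.

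\emph{Case 2: unbounded heights.} Otherwise, for each $D$ only finitely many $n$ admit such factorizations. Then for large $n$, every factor of $F$ mod $t^n$ that is irreducible mod $t^n$ is reducible mod $t^D$. We would like to refine it into more pieces. Taking $D=n'$ smaller and iterating, each refinement increases the number of factors, which Lemma \ref{reduced} bounds by $k$. So this cannot continue $k$ times. Making the iteration precise means choosing $D_1<D_2<\dots$ and arguing that the length of a maximal factorization mod $t^{D_j}$ must stabilize at some level. At that level, the irreducible factors have bounded height, which returns us to Case 1.

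\textbf{Main obstacle.} The main obstacle is exactly this bookkeeping in Case 2: the height bound must be uniform across $n$, and the refinements must not lose compatibility with the original $F$. We would first try to handle it by choosing, for each $n$, a factorization of maximal length $k_n(F)+1$ into irreducibles mod $t^n$. Since $k_n(F)$ is nondecreasing in $n$ and bounded by $k$, it stabilizes. Any factor in a maximal-length factorization mod $t^n$ is irreducible mod $t^n$. We then need to check that it is irreducible mod $t^{n_0}$ for the stabilization point $n_0$, giving $D=n_0$. This uses that a splitting mod $t^{n_0}$ would not directly lengthen a factorization mod $t^n$, so this step needs extra care.
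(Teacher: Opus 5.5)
You have the right architecture. Contraposition, Lemma \ref{finite_set} for condition 2, and Lemma \ref{tree} to assemble a factorization is exactly the paper's endgame. Your Case 1 is sound, including the remark that Lemma \ref{tree} works for the restricted sets (pairs with $a$ irreducible mod $t^D$): its proof never uses that $C_n$ is the full solution set.

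\textbf{The gap.} The gap is precisely the step you flag: you never establish a uniform $D$. Case 2 is only a sketch, and the announced inductions on $k$ and on $r$ are never carried out. The stabilization argument in your last paragraph also starts from a false claim. $k_n(F)$ is \emph{nonincreasing} in $n$, not nondecreasing, because a factorization into non-units mod $t^n$ is also one mod $t^{n'}$ for every $n'\le n$. Your worry that ``a splitting mod $t^{n_0}$ would not directly lengthen a factorization mod $t^n$'' comes from comparing at the wrong level. As written, the proposal does not prove the proposition.

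\textbf{How to close it along your route.} You are assuming $F$ is reducible mod every $t^n$, so $1\le k_n(F)\le k-1$ for all $n$ by Lemma \ref{reduced}. Being nonincreasing, $k_n(F)$ equals some $k_\infty\ge 1$ for all $n\ge n_0$. For $n\ge n_0$, take a maximal-length factorization $F\equiv_n W_1\cdots W_{k_\infty+1}$ into non-units. Suppose some $W_i$ split nontrivially mod $t^{n_0}$. Reducing mod $t^{n_0}$ and inserting the splitting would give $k_\infty+2$ non-unit factors of $F$ mod $t^{n_0}$, contradicting $k_{n_0}(F)=k_\infty$. So $a^{(n)}=W_1$ is irreducible mod $t^{D}$ with $D=n_0$, and $b^{(n)}=W_2\cdots W_{k_\infty+1}$ is a non-unit. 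Fixing $r=v(a^{(n)}_0)$ along a subsequence puts you in Case 1, and Case 2 disappears.

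\textbf{How the paper does it.} The paper takes $r$ \emph{minimal} such that $F$ has non-unit factors $A$ with $v(A_0)=r$ modulo arbitrarily high powers of $t$. Your idea of minimizing the valuation of the first factor is the germ of this. If no uniform $D$ existed, a splitting $A\equiv_{D'}CE$ would give factors $C$ of $F$ mod $t^{\min(n,D')}$ with $0<v(C_0)<r$, contradicting minimality. This yields the height bound for \emph{every} factor of valuation $r$ at large $n$, so Lemma \ref{tree} applies to the full sets $C_n$ as stated. Your corrected version gives the bound only for some factors, which is why you need the restricted form of Lemma \ref{tree}.
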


\begin{proof}
    By Lemma \ref{zero_coefficient} we may assume that $F_0 \neq 0$. Let $k = v(F_0)$. Then $k > 0$ and by multiplying with a unit we have $F_0 = p^k$, where $p$ is a uniformizer for $V$. If $k = 1$, then $F$ is irreducible modulo $t$, so assume $k > 1$.
    Suppose that $F$ is reducible modulo arbitrarily high powers of $t$. Let $1 \leq r \leq k-1$ be a minimal integer such that $F$ has a non-unit factor $A$ modulo $t^n$ with $v(A_0) = r$ for arbitrarily large $n$.     
    By multiplying with units we may assume that for arbitrarily large $n$ there exists a factorization $F \equiv AB \Mod{t^n}$ with $A_0 = p^r, B_0 = p^{k-r}$. Moreover, by the minimality of $r$, there exists an integer $D \geq 1$ such that for every large enough $n$, such a factorization $F \equiv AB \Mod{t^n}$ is with $A$ irreducible modulo $t^D$. Indeed, if no such $D$ existed, then for arbitrarily large pairs of integers $n,D'$ we could find a factorization $$F \equiv_n  AB$$ with $A_0 = p^r$ and a non-trivial factorization $$A \equiv_{ D'} CE,$$
    with $0<v(C_0) < r$; Then $$F \equiv_{\min(n, D')} CEB,$$
and by taking $n,D'$ to be both sufficiently large we can obtain arbitrarily high factors $C$ for $F$ with $0<v(C_0) < r$, a contradiction. Thus there exists $n_0$ such that every factorization $F \equiv_n AB$ with $v(A_0) = r$ and $n \geq n_0$ is with $A$ irreducible modulo $t^D$.

We now verify the conditions of Lemma \ref{tree}, with $R,f,d,e$ being $V,F,p^r,p^{k-r}$ here, respectively. The first condition of the Lemma holds by our assumption above of the existence of a factor $A$ of $F$ mod $t^n$ with $v(A_0) = r$ for arbitrarily large $n$. Indeed, if $AB \equiv F \Mod{t^n}$, then by multiplying $A,B$ with suitable units we may assume that $A_0 = p^r$, $B_0 = p^{k-r}$.

To verify the second condition, let $\ell \in \N$ and let ${\cal{A}}_{n}$ be the set of all ideals $(A,t^\ell)$, where $A \in V[[t]]$ is irreducible mod $t^D$, $A_0 = p^r$, and $A|_nF$. By Lemma \ref{finite_set} there exists $M \in \N$ such that $\bigcup_{n\geq M}{\cal{A}}_{n}$ is finite. Now let $M' = \max(M,n_0)$. Then, if $AB \equiv_n F$ with $n \geq M'$, and $A_0 = p^r$, then $(A,t^\ell) \in {\cal{A}}_n$. Hence, with the notation of Lemma \ref{tree}, we have

$$\bigcup_{n \geq M'} I_{n,\ell} \subseteq \bigcup_{n \geq M} {\cal{A}}_n,$$
and the right-hand side is finite, hence so is the left. This establishes the conditions of the lemma. Thus $F$ is reducible, a contradiction.
   
\end{proof}

\begin{lemma}
Let $F_1,\ldots,F_s\in V[[t]]$  
be irreducible elements with non-zero constant terms. For each subset
$J\subseteq \{1,\ldots,s\}$, write
$$
F_J=\prod_{j\in J}F_j,
$$
with the empty product equal to $1$. Then, for every
$i\in\{0,\ldots,s\}$ and every $\ell \geq 1$, there exists an integer
$H_i(\ell)$ with the following property: if
$$
F_1\cdots F_i\equiv_n AB, n\ge H_i(\ell),
$$
then there is a subset $J\subseteq \{1,\ldots,i\}$ such that
$$
(A,t^\ell)=(F_J,t^\ell).
$$
\end{lemma}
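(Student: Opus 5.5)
We argue by induction on $i$. For $i=0$ the product is $1$, so $A$ and $B$ are units modulo $t^n$ for $n\ge 1$, and $(A,t^\ell)=(1,t^\ell)=(F_\emptyset,t^\ell)$. Hence $H_0(\ell)=1$ works.

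For the inductive step, set $G=F_1\cdots F_{i-1}$ and suppose $G F_i\equiv_n AB$. By Proposition \ref{theorem_over_dvr}, each $F_i$ is irreducible modulo $t^{D_i}$ for some $D_i$. Let $r_i=v((F_i)_0)$, which is at least $1$ because $F_i$ is a non-unit. By Proposition \ref{quasi-prime}, $F_i$ is $C$-prime with $C=C_{r_i}(D_i)$. We have $F_i\,|_n\,AB$. Let $a,b$ be the maximal integers with $F_i|_aA$ and $F_i|_bB$; if either is infinite, Lemma \ref{lem_a|b} gives an exact division. $C$-primality gives $a+b>n-C$.

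We first dispose of the degenerate case in which $A$ or $B$ is a unit. If $A$ is a unit then $(A,t^\ell)=(1,t^\ell)$, and if $B$ is a unit then $A\cong_n F_1\cdots F_i$; in both cases the conclusion holds. So assume both are non-units and consider the two cases $a\ge n/2-C$ or $b\ge n/2-C$.

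\textbf{Case $a\ge n/2-C$.} Write $A\equiv_{a}F_iA'$. Then
$$F_i G\equiv_{a} F_iA'B,$$
and cancelling $F_i$ (its constant term is nonzero) gives $G\equiv_{a}A'B$. If $a\ge H_{i-1}(\ell)$, the induction hypothesis yields $(A',t^\ell)=(F_J,t^\ell)$ with $J\subseteq\{1,\dots,i-1\}$. Multiplying by $F_i$, and using $a\ge\ell$, we get $(A,t^\ell)=(F_iA',t^\ell)=(F_{J\cup\{i\}},t^\ell)$.

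\textbf{Case $b\ge n/2-C$.} Symmetrically, write $B\equiv_b F_iB'$ and cancel to get $G\equiv_b AB'$. The induction hypothesis applied to the pair $(A,B')$ gives $(A,t^\ell)=(F_J,t^\ell)$ directly.

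So we may take $H_i(\ell)=2\bigl(\max(H_{i-1}(\ell),\ell)+C\bigr)$.

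One point needs care. Writing $A\equiv_aF_iA'$ is exactly the meaning of $F_i|_aA$. The product $A'B$ may become a unit modulo small powers, but the induction hypothesis includes unit factors, so this causes no problem. The main obstacle is the formal manipulation that transfers the congruence from $(A',t^\ell)$ to $(A,t^\ell)$: from $(A',t^\ell)=(F_J,t^\ell)$ and $A\equiv_aF_iA'$ with $a\ge\ell$, we obtain $(A,t^\ell)=(F_iA',t^\ell)=F_i(A',t^\ell)+(t^\ell)=(F_iF_J,t^\ell)$. This works, so I expect the argument to go through without further difficulty.
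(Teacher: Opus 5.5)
Your argument is correct and is essentially the paper's proof: induction on $i$, $C$-primality of $F_i$ via Propositions~\ref{theorem_over_dvr} and~\ref{quasi-prime}, one of the two maximal divisibility levels exceeding roughly $n/2$, cancellation of $F_i$, and the induction hypothesis applied to $(A',B)$ or $(A,B')$. The only fix needed is to reduce modulo $t^{\min(a,n)}$ rather than $t^{a}$, since $a$ may exceed $n$ or be infinite; the paper instead works at the fixed level $n'=\lfloor (n-C_i)/2\rfloor$. Your separate treatment of the case where $A$ or $B$ is a unit is unnecessary, because the main argument already covers it.
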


\begin{proof}
We prove the statement by induction on $i$.

For $i=0$, take $H_0(\ell)=\ell$. Then $F_1\cdots F_i=1$, so 
$$
1\equiv_n AB, n\ge \ell,
$$
implies that $A$ is a unit modulo $t^\ell$ and hence
$(A,t^\ell)=V[[t]]$.

Assume now that $i\ge 1$, and assume that the result has been proven for $i-1$. Put
$P=F_i,\ G=F_1\cdots F_{i-1}$ and let 
$
d_i=v(P_0)>0.
$
By Proposition \ref{theorem_over_dvr} there is $D_i\ge 1$ such that $P$ is irreducible mod $t^{D_i}$. Put $C_i=C_{d_i}(D_i)$
where $C_{d_i}(D_i)$ is the constant given by Proposition \ref{quasi-prime}. 

Choose $H_i(\ell)$ large enough so that for every $n\ge H_i(\ell)$, if
$$
n'=\left\lfloor \frac{n-C_i}{2}\right\rfloor,
$$
then
$$
n'\geq H_{i-1}(\ell),
n'\geq \ell.
$$

Suppose that $PG\equiv_n AB, n\ge H_i(\ell)$. 
Then $P|_n AB$. If $P|_r A$ for arbitrarily large $r$, then $P|A$ by Lemma \ref{lem_a|b}, and in particular $P|_{n'} A$. Thus we may assume that there exists a maximal integer $\alpha$ such that $P |_\alpha A$ . Similarly we may assume that $\beta$ is a maximal integer such that $P|_\beta B$. Then $$
\alpha+\beta>n-C_i.
$$
Therefore at least one of $\alpha,\beta$ is at least $n'$. Hence
$P |_{n'} A$ or $P|_{n'} B.$ First suppose that $P|_{n'}A$. Choose $A'\in V[[t]]$ such that $A \equiv_{n'} PA'$. Reducing $PG\equiv_n AB$ modulo $t^{n'}$, we get
$$
PG\equiv_{n'} PA'B.
$$
Since $P_0\neq 0$, we deduce that
$$
G\equiv_{n'} A'B.
$$
By the induction hypothesis applied to $G=F_1\cdots F_{i-1}$, there is a subset $J\subseteq \{1,\ldots,i-1\}$ such that
$$
(A',t^\ell)=(F_J,t^\ell).
$$

Since $n'\ge \ell$ and $A\equiv_{n'}PA'$, we have
$$
(A,t^\ell) =(PA',t^\ell) =
(PF_J,t^\ell) = (F_{J\cup\{i\}},t^\ell).
$$

Now suppose that $P|_{n'}B$. Choose $B'\in V[[t]]$ such that
$B\equiv_{n'}PB'$. Reducing $PG\equiv_n AB$ modulo $t^{n'}$, we get
$$
PG\equiv_{n'} APB'.
$$
and hence
$$
G\equiv_{n'} AB'.
$$
By the induction hypothesis applied to $G=F_1\cdots F_{i-1}$, there is a subset $J\subseteq \{1,\ldots,i-1\}$ such that
$$
(A,t^\ell)=(F_J,t^\ell).
$$

This proves the induction step and concludes the proof.
\end{proof}

\begin{lemma}\label{local_bound} Fix $F \in V[[t]]$ with $F_0 \neq 0$, $k = v(F_0) > 1$, and fix  integers $0 \leq r  \leq k$, $\ell \geq 1$. For any $n \in \N$ let 

$${\cal{B}}_n = \{(A,t^\ell) \st A\in V[[t]], v(A_0) = r, A|_nF\}.$$

There exists $M \in \N$ such that $\bigcup_{n \geq M}{\cal{B}}_n$ is finite. \end{lemma}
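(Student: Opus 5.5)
Fix $F$, $k$, $r$ and $\ell$ as in the statement. The plan is to reduce to the preceding (unnamed) lemma on products of irreducibles by factoring $F$ itself. Since $V[[t]]$ is a Krull domain, hence atomic, we can write $F = F_1\cdots F_s$ with each $F_j$ irreducible in $V[[t]]$. Because $F_0\neq 0$, each $(F_j)_0\neq 0$, so the $F_j$ satisfy the hypotheses of that lemma. Taking $i=s$ there, we get an integer $H_s(\ell)$ with the following property: whenever $F\equiv_n AB$ with $n\geq H_s(\ell)$, we have $(A,t^\ell)=(F_J,t^\ell)$ for some $J\subseteq\{1,\ldots,s\}$.

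Now set $M = H_s(\ell)$. Let $(A,t^\ell)\in\mathcal{B}_n$ with $n\geq M$. Then $A|_n F$, so there is $B$ with $F\equiv_n AB$. Hence $(A,t^\ell)$ equals one of the at most $2^s$ ideals $(F_J,t^\ell)$. Consequently
$$\bigcup_{n\geq M}\mathcal{B}_n\subseteq\{(F_J,t^\ell)\st J\subseteq\{1,\ldots,s\}\},$$
which is finite. Note that the condition $v(A_0)=r$ is not needed for finiteness; it only restricts which subsets $J$ can occur, namely those with $\sum_{j\in J}v((F_j)_0)=r$. The case $r=0$, where $A$ is a unit and the ideal is the whole ring, is subsumed by $J=\emptyset$.

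The only substantive input is the previous lemma. That lemma in turn rests on Proposition \ref{theorem_over_dvr}, which gives a finite irreducibility height $D_j$ for each $F_j$, and on Proposition \ref{quasi-prime}, which makes each $F_j$ $C$-prime. So the main point to check is that its hypotheses hold. The factorization into irreducibles must exist, which follows from atomicity of $V[[t]]$ for a DVR $V$; alternatively, induct on $v(F_0)$, since any factorization into non-units with nonzero constant terms strictly lowers valuations (the case $t\mid F$ is excluded by $F_0\neq0$). The constant terms must be nonzero, which is immediate. Beyond this I expect no real obstacle. If one preferred to avoid that lemma, one could instead argue directly with Lemma \ref{finite_set} by induction on $r$, peeling off irreducible-mod-$t^D$ factors as in the proof of Proposition \ref{theorem_over_dvr}, but the route above is cleaner.
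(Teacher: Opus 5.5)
Your proof is correct and is essentially the paper's own argument. You factor $F=F_1\cdots F_s$ into irreducibles using atomicity of $V[[t]]$, take $M=H_s(\ell)$ from the preceding lemma, and conclude that $\bigcup_{n\geq M}\mathcal{B}_n$ lies in the finite set $\{(F_J,t^\ell)\st J\subseteq\{1,\ldots,s\}\}$.

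Your side remarks are also correct: the condition $v(A_0)=r$ plays no role in the finiteness, and atomicity here follows directly because the number of non-unit factors is bounded by $v(F_0)$.
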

\begin{proof}
    %If $r = 0$ then $F$ is a unit, and then $(A,t^\ell) = V[[t]]$ for all $A$. If $r = k$ then $(A,t^\ell) = (F,t^\ell)$ whenever $A|_n F$ and $n \geq \ell$. So suppose that $0 < r< k$. 
    Since $V[[t]]$ is atomic, let us write $F = F_1\cdot \ldots \cdot F_s$, where $F_1,\ldots,F_s \in V[[t]]$ are irreducible. Let $M = H_s(\ell)$ be given by the preceding lemma. Let $n \geq M$ and let $(A,t^\ell) \in {\cal{B}}_n$. By definition, there exists $B \in V[[t]]$ such that $F \equiv_n AB$. By the assertion of the preceding lemma, we have a subset $I \subseteq \{1,\ldots,s\}$ such that $$(A,t^\ell) = (F_I,t^\ell).$$ Thus every ideal in $\bigcup_{n \geq M}{\cal{B}}_n$ belongs to the finite set $$\{(F_J,t^\ell) \st J \subseteq \{1,\ldots,s\}\},$$  hence $\bigcup_{n \geq M}{\cal{B}}_n$ is finite. 
    
\end{proof}

\section{Proof of Theorem \ref{finite_irreducibilty_theorem}}\label{sec:krull_proof}

In this section we prove Theorem \ref{finite_irreducibilty_theorem}. Let us first recall the definition of a Krull domain: A domain $R$ is called a {\it Krull domain} if:
\begin{enumerate}
    \item $\bigcap_{\frp} R_\frp = R$, where the intersection is taken over all height-1 prime ideals of $R$. (Here, $R_\frp$ denotes the localization of $R$ at $\frp$).
    \item Each $R_\frp$ is a rank-1 discrete valuation ring. We shall denote the corresponding discrete valuation by $v_\frp$. 
    \item For each $0 \neq a \in R$, there exist finitely many height-1 prime ideals $\frp$ that contain $a$.
\end{enumerate}

Recall \cite[Chapter VII, \S1]{BourbakiCA} that every Noetherian integrally closed domain is a Krull domain, that every unique factorization domain is a Krull domain, and that every Krull domain is integrally closed. Krull domains need not be Noetherian; for instance, a polynomial ring in infinitely many variables over a field is a non-Noetherian UFD, hence a Krull domain.

\begin{lemma}
    Let $R$ be a Krull domain, and let $0 \neq a \in R$. Then $a$ has finitely many divisors in $R$, up to associates. 
\end{lemma}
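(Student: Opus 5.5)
The plan is to pin down each divisor $b$ of $a$, up to a unit, by the finitely many valuations $v_\frp(b)$, and then count the possible valuation vectors.

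Let $\frp_1,\ldots,\frp_k$ be the height-1 primes of $R$ containing $a$. There are finitely many of them by axiom 3 of the definition of a Krull domain. Suppose $b \in R$ divides $a$, say $a = bc$ with $c\in R$. For every height-1 prime $\frp$, both $v_\frp(b)$ and $v_\frp(c)$ are non-negative, since $b,c\in R\subseteq R_\frp$, and $v_\frp(b)+v_\frp(c)=v_\frp(a)$. Hence $0\le v_\frp(b)\le v_\frp(a)$ for all $\frp$. In particular, $v_\frp(b)=0$ whenever $\frp\notin\{\frp_1,\ldots,\frp_k\}$. So the vector $(v_{\frp}(b))_{\frp}$ is determined by the tuple $(v_{\frp_1}(b),\ldots,v_{\frp_k}(b))$. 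This tuple lies in the finite set $\prod_{i=1}^k\{0,\ldots,v_{\frp_i}(a)\}$.

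Next I show that two divisors $b,b'$ of $a$ with the same valuation vector are associates. Both are non-zero, so $u=b/b'$ lies in the fraction field and satisfies $v_\frp(u)=0$ for every height-1 prime $\frp$. Thus $u\in R_\frp$ for all such $\frp$, and axiom 1 gives $u\in\bigcap_\frp R_\frp=R$. The same argument applied to $u^{-1}$ gives $u^{-1}\in R$, so $u\in R^\times$.

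Combining the two steps, the map sending a divisor $b$ of $a$ to its valuation vector is injective on associate classes. Its image lies in a finite set, so $a$ has at most $\prod_{i=1}^k (v_{\frp_i}(a)+1)$ divisors up to associates. There is no real obstacle here. The only point needing care is the associate step, which uses the intersection property (axiom 1) applied to both $u$ and $u^{-1}$.
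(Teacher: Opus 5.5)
Your proposal is correct and follows the same route as the paper: bound the height-1 valuations of a divisor $b$ by those of $a$, then use that a nonzero element is determined up to a unit by its valuation vector. The only difference is that the paper cites Bourbaki for this last fact, while you prove it directly from the intersection axiom applied to $u=b/b'$ and $u^{-1}$.
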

\begin{proof}
Let $\mathcal P$ be the finite set of height-1 prime ideals of $R$ that contain $a$. For each $\frp \in \mathcal P$, let $v_\frp$ be the corresponding discrete valuation. If $b$ divides $a$, then $0 \leq v_\frp(b) \leq v_\frp(a)$ for all $\frp \in \mathcal P$, and $v_{\mathfrak q}(b)=0$ for every height-1 prime $\mathfrak q \notin \mathcal P$. Thus there are only finitely many possible valuation vectors for $b$. Since a principal fractional ideal in a Krull domain is determined by its height-1 valuations \cite[Chapter VII, \S1]{BourbakiCA}, there are only finitely many possible principal ideals $bR$. Hence $a$ has only finitely many divisors up to associates.
\end{proof}

\begin{lemma}\label{V_to_R}
    Let $R$ be a Krull domain. Let $a,c \in R[[t]]$ with $0 \neq a_0 = c_0$. Let $\ell \geq 1$. Suppose that for every height-1 prime ideal $\frp$ of $R$ with $a_0 \in \frp$ we have $a \cong_\ell c$ in $R_\frp[[t]]$. Then $a \cong_\ell c$ in $R[[t]]$. 
\end{lemma}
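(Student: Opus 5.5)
We need to show $(a,t^\ell)=(c,t^\ell)$ in $R[[t]]$; by symmetry it suffices to show $c\in(a,t^\ell)$, i.e.\ to find $q\in R[[t]]$ with $aq\equiv_\ell c$. Since only the first $\ell$ coefficients matter, everything happens in the truncated rings $R[t]/(t^\ell)$ and $R_\frp[t]/(t^\ell)$. The plan is to solve the congruence $aq\equiv_\ell c$ over the fraction field $K$ of $R$, and then use the Krull intersection property $R=\bigcap_\frp R_\frp$ to show that the coefficients of $q$ actually lie in $R$.

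\textbf{Step 1: the unique solution over $K$.} Since $a_0\neq 0$, the truncation of $a$ is a unit in $K[t]/(t^\ell)$. So there is a unique polynomial $q=q_0+q_1t+\cdots+q_{\ell-1}t^{\ell-1}\in K[t]$ with $aq\equiv_\ell c$. Explicitly, $q_0=c_0/a_0=1$, and recursively
$$q_k=a_0^{-1}\Bigl(c_k-\sum_{i=1}^{k}a_iq_{k-i}\Bigr).$$
The same uniqueness holds inside any subring $S$ with $R\subseteq S\subseteq K$: if $aq'\equiv_\ell c$ with $q'\in S[[t]]$, then $a(q-q')\equiv_\ell 0$. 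Since $a_0\ne 0$ and $R$ is a domain, an induction on coefficients gives $q'\equiv_\ell q$.

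\textbf{Step 2: local integrality at primes containing $a_0$.} Let $\frp$ be a height-1 prime with $a_0\in\frp$. By hypothesis $c\in(a,t^\ell)$ in $R_\frp[[t]]$, so there is $q'\in R_\frp[[t]]$ with $aq'\equiv_\ell c$. By the uniqueness in Step 1 (with $S=R_\frp$), $q_k\in R_\frp$ for all $k<\ell$.

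\textbf{Step 3: local integrality at the remaining primes.} Let $\frp$ be a height-1 prime with $a_0\notin\frp$. Then $a_0$ is a unit in $R_\frp$, so the recursion in Step 1 visibly gives $q_k\in R_\frp$.

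\textbf{Step 4: conclusion.} Steps 2 and 3 cover every height-1 prime, so each $q_k\in\bigcap_\frp R_\frp=R$. Hence $q\in R[t]$ and $c\in(a,t^\ell)$ in $R[[t]]$. Exchanging the roles of $a$ and $c$ gives $a\in(c,t^\ell)$, so $a\cong_\ell c$ in $R[[t]]$.

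The only point requiring care is the uniqueness argument in Step 1, which is what transfers the local witnesses $q'$ to the one global candidate $q$. It uses only $a_0\neq0$ and the fact that $R_\frp\subseteq K$; no unit adjustment is needed, because $c_0=a_0$ forces $q_0=1$ everywhere. The finiteness of the set of primes containing $a_0$ is not actually needed here; the argument uses only condition~1 of the definition of a Krull domain.
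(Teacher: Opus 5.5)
Your proof is correct and follows essentially the same route as the paper. The paper also takes the unique solution $u=a^{-1}c$ over the fraction field and shows its first $\ell$ coefficients lie in every $R_\frp$: at primes containing $a_0$ by comparing with a local witness, and at the others because $a$ is invertible in $R_\frp[[t]]$. It then concludes via $R=\bigcap_\frp R_\frp$. The only cosmetic difference is at the end: the paper observes that $u_0=1$ makes the truncation of $u$ a unit in $R[[t]]$, which gives both inclusions at once, whereas you invoke the symmetry of the hypothesis in $a$ and $c$, which is equally valid since $a_0=c_0$.
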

\begin{proof}
    %{\color{blue} Let $K$ denote the fraction field of $R$.}
         
  Let $E$ denote the quotient field of $R$. 
Since $a_0 \neq 0$, we have $u = a^{-1}c \in E[[t]]^\times$, and $u_0 = 1$ since $a_0 = c_0$. Let $\frp$ be a height-1 prime ideal of $R$ with $a_0\in \frp$. By assumption, there exists $w \in R_\frp[[t]]^\times$ such that $c \equiv_\ell wa$, hence $ua \equiv_\ell wa$ in $E[[t]]$, hence $w \equiv_\ell u$ in $E[[t]]$, hence the first $\ell$ coefficients of $u$ belong to $R_\frp$. 

Next, if $\frp$ is a height-1 prime ideal in $R$ which does not contain $a_0$, then $a$ is a unit in $R_\frp[[t]]$, hence $u = a^{-1}c \in R_\frp[[t]]$, and in particular the first $\ell$ coefficients of $u$ belong to $R_\frp$. Thus these first $\ell$ coefficients belong to the intersection $\bigcap_\frp R_\frp$, where $\frp$ runs over all height-1 primes of $R$, hence these coefficients belong to $R$, since $R$ is a Krull domain. Since $u_0 = 1$, it follows that $c \cong_\ell a$ in $R[[t]]$.

\end{proof}

\begin{proposition}\label{finite_union}
    Let $R$ be a Krull domain, let $f \in R[[t]]$ with $f_0 \neq 0$. Let $d, e$ be non-units in $R$ with $f_0 = de$, and fix $\ell \in \N$. For any $n \in \N$, let $I_n$ be the set of ideals of the form $(a,t^\ell)$ in $R[[t]]$, where $a \in R[[t]]$ is a factor of $f$ mod $t^n$ with $a_0 = d$. Then there exists $M \in \N$ such that 
    $$\bigcup_{n \geq M} I_n$$ is finite. 
\end{proposition}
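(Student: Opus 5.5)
The strategy is to reduce to the DVR case by localizing at the finitely many height-1 primes containing $f_0$, and then to glue back using Lemma \ref{V_to_R}.

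Let $\frp_1,\ldots,\frp_q$ be the height-1 primes of $R$ containing $f_0$; there are finitely many by the definition of a Krull domain. For each $i$, write $V_i=R_{\frp_i}$, a DVR, and $k_i=v_{\frp_i}(f_0)$. If $a\in R[[t]]$ is a factor of $f$ mod $t^n$ with $a_0=d$, then $a|_nf$ also holds in $V_i[[t]]$, and $v_{\frp_i}(a_0)=v_{\frp_i}(d)=:r_i$, with $0\le r_i\le k_i$. I would apply Lemma \ref{local_bound} over $V_i$ with $F=f$, $r=r_i$ and the fixed $\ell$. This yields $M_i$ such that the set of ideals $(a,t^\ell)V_i[[t]]$ arising from such $a$ with $n\ge M_i$ is finite.

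Lemma \ref{local_bound} is stated for $k>1$, so the degenerate cases need a small separate check.
\begin{itemize}
\item If $k_i=1$, then $r_i\in\{0,1\}$.
\item If $r_i=0$, then $a$ is a unit in $V_i[[t]]$ and the ideal is everything.
\item If $r_i=k_i$, then $b_0$ is a $V_i$-unit, so $b$ is a unit in $V_i[[t]]$ and $(a,t^\ell)=(f,t^\ell)$ once $n\ge\ell$.
\end{itemize}
In each of these cases there is only one possible local ideal. Set $M=\max(\ell,M_1,\ldots,M_q)$.

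Now take two factors $a,c$ of $f$ mod $t^n,t^{n'}$ with $n,n'\ge M$ and $a_0=c_0=d$. Suppose they determine the same local ideal $(a,t^\ell)=(c,t^\ell)$ in $V_i[[t]]$ for every $i$, i.e.\ $a\cong_\ell c$ in each $V_i[[t]]$. Every height-1 prime containing $a_0=d$ contains $f_0$, so it is among the $\frp_i$. Lemma \ref{V_to_R} then gives $a\cong_\ell c$ in $R[[t]]$, that is, $(a,t^\ell)=(c,t^\ell)$. Hence the map sending $(a,t^\ell)$ to the tuple of its localized ideals is injective on $\bigcup_{n\ge M}I_n$. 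Its image lies in a product of finitely many finite sets, so $\bigcup_{n\ge M}I_n$ is finite.

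The main obstacle is making sure Lemma \ref{local_bound} really applies locally. In particular, the local factors must have exactly the prescribed valuation $r_i$; this holds because $a_0=d$ is fixed. I also need to use that the normalization $a_0=c_0$ required in Lemma \ref{V_to_R} is exactly the condition $a_0=d$ built into $I_n$. Beyond this, the argument is bookkeeping. The finiteness of the list of $\frp_i$ is what makes a single $M$ work.
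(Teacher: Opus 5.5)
Your proposal is correct and follows essentially the same route as the paper. The paper also localizes at the finitely many height-1 primes containing $f_0$ and handles $r_\frp=0$ and $r_\frp=k_\frp$ directly. It applies Lemma \ref{local_bound} in the remaining cases and obtains finiteness from the injectivity of the localization map via Lemma \ref{V_to_R}. The one cosmetic difference is that you invoke Lemma \ref{local_bound} whenever $k_i>1$, whereas the paper does so only when $0<r_\frp<k_\frp$; this is harmless, since that lemma allows $0\le r\le k$.
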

\begin{proof}

Let $\cal{P}$ be the family of height-1 prime ideals of $R$ that contain $f_0$. Since $R$ is a Krull domain, ${\cal{P}}$ is finite. For any $\frp \in {\cal{P}}$ let $V_\frp = R_{\frp}$, let $v_\frp$ denote the corresponding valuation, and let $k_\frp = v_\frp(f_0), r_\frp = v_\frp(d)\leq k_\frp $. For any $n \geq 1$ let 
$I_{n,\frp}$ be the set of ideals in $V_\frp[[t]]$ of the form $(a,t^\ell)V_\frp[[t]]$, where $a$ is a factor of $f$ mod $t^n$ in $R[[t]]$ with $a_0 = d.$ 

For each $\frp \in {\cal{P}}$ we construct a finite set $J_\frp$ of ideals in $V_\frp[[t]]$ and an integer $M_\frp \geq 1$ such that 

$$\bigcup_{n \geq M_\frp} I_{n,\frp} \subseteq J_\frp.$$
To do this, we distinguish between three cases:

If $r_\frp = 0$, then $d \in V_\frp^\times$, hence if $a\in R[[t]]$ with $a_0 = d$ then $a \in V_\frp[[t]]^\times$. In this case, we may take $M_\frp = 1$ and $J_\frp = \{V_\frp[[t]]\}$. 

Next, if $r_\frp = k_\frp$, then $e \in V_\frp^\times$. Then for any $n \geq \ell$, if $f \equiv_n ab$ in $R[[t]]$ with $a_0 = d$ then $b_0 = e \in V_\frp^\times$, and then $(a,t^\ell)V_\frp[[t]] = (f,t^\ell)V_\frp[[t]]$. Thus we may take $M_\frp = \ell$ and $J_\frp = \{(f,t^\ell)V_\frp[[t]]\}.$

Now suppose that $0 < r_\frp < k_\frp$. By Proposition \ref{local_bound} applied to $f$ as an element of $V_\frp[[t]]$, there exists $M_\frp \in \N$ such that 

$$J_\frp := \bigcup_{n \geq M_\frp}\{(A,t^\ell)V_\frp[[t]] | A \in V_\frp[[t]], v_\frp(A_0) = r_\frp, A|_n f\}$$ is finite. If $a,b \in R[[t]]$ satisfy $f \equiv_n ab$ and $a_0 = d$, then $a|_n f$ also in $V_\frp[[t]]$ and $v_\frp(a_0) = r_\frp$, hence $\bigcup_{n \geq M_\frp} I_{n,\frp} \subseteq J_\frp.$

Now let $M = \max_{\frp \in {\cal{P}}} M_\frp$. If $n \geq M$ and $(a,t^\ell) \in I_{n}$, then $$(a,t^\ell)V_\frp[[t]] \in J_\frp$$
for every $\frp \in {\cal{P}}$. Now consider the map $$\Phi \colon \bigcup_{n \geq M} I_n \to \Pi_{\frp \in {\cal{P}}} J_\frp$$ given by $$\Phi(a,t^\ell) = ((a,t^\ell)V_\frp[[t]])_{\frp \in {\cal{P}}}.$$
This map is well-defined, since if $(a,t^\ell) = (c,t^\ell)$ in $R[[t]]$ then also $(a,t^\ell)V_\frp[[t]] = (c,t^\ell)V_\frp[[t]]$. We claim that $\Phi$ is injective. Indeed, suppose that $(a,t^\ell),(c,t^\ell)$ satisfy $(a,t^\ell)V_\frp[[t]] = (c,t^\ell)V_\frp[[t]]$ for all $\frp \in {\cal{P}}$. By the definition of $I_n$, we may assume that $a_0= c_0 = d.$ Now consider any height-1 prime ideal $\frqqq$ containing $d$. Since $f_0 = de$, we have $f_0 \in \frqqq$ and hence $\frqqq \in {\cal{P}}$. Thus $(a,t^\ell) = (c,t^\ell)$ in $R[[t]]$, by Lemma \ref{V_to_R}.  Thus $\Phi$ is injective. It follows that $\bigcup_{n \geq M} I_n$ is finite, since $\Pi_{\frp \in {\cal{P}}} J_\frp$ is finite. 
\end{proof}

We are now ready to prove Theorem \ref{finite_irreducibilty_theorem}:

\begin{theorem}\label{finite_irreducibility_section}
    Let $R$ be a Krull domain. Let $f \in R[[t]]$ be irreducible. Then $f$ is irreducible of finite height. 
\end{theorem}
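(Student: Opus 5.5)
We argue by contradiction, combining Proposition \ref{finite_union} with Lemma \ref{tree}. Let $f \in R[[t]]$ be irreducible. If $f_0 = 0$, then Lemma \ref{zero_coefficient} shows that $f$ is irreducible of height $2$, so we may assume $f_0 \neq 0$. Suppose $f$ is not irreducible of finite height. Then for every $n$ there are non-units $a^{(n)}, b^{(n)} \in R[[t]]$ with $f \equiv_n a^{(n)}b^{(n)}$. A power series is a unit exactly when its constant term is a unit of $R$, so the constant terms $d_n = a^{(n)}_0$ and $e_n = b^{(n)}_0$ are non-units of $R$ with $d_ne_n = f_0$.

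The first step fixes the constant terms. By the lemma on divisors in Krull domains, $f_0$ has only finitely many divisors up to associates. Passing to a subsequence, we may therefore assume all $d_n$ are associate to a fixed non-unit $d$. Multiplying $a^{(n)}$ by a unit of $R$ and $b^{(n)}$ by its inverse, we may assume $a^{(n)}_0 = d$ exactly. Then $e_n = f_0/d =: e$ is independent of $n$, and $e$ is a non-unit.

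The second step checks the hypotheses of Lemma \ref{tree} with this $d$ and $e$. Condition (1), that $C_n \neq \emptyset$ for arbitrarily large $n$, holds by construction. For condition (2), fix $\ell$. The set $I_{n,\ell}$ of Lemma \ref{tree} consists of ideals $(a,t^\ell)$ with $a$ a factor of $f$ modulo $t^n$ and $a_0 = d$, so $I_{n,\ell}$ is contained in the set $I_n$ of Proposition \ref{finite_union}. That proposition, applied with $d$ and $e$ non-units and $f_0 = de \neq 0$, provides $M_\ell$ such that $\bigcup_{n \geq M_\ell} I_n$ is finite. Hence $\bigcup_{n\ge M_\ell} I_{n,\ell}$ is finite as well.

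Lemma \ref{tree} then yields $a, b \in R[[t]]$ with $a_0 = d$, $b_0 = e$ and $f = ab$. Since $d$ and $e$ are non-units, both $a$ and $b$ are non-units, contradicting the irreducibility of $f$.

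All the substantive work has already been done in Proposition \ref{finite_union}, which rests on the DVR analysis and the local-global Lemma \ref{V_to_R}, so no step here is a real obstacle. The only points needing care are the normalization making $a^{(n)}_0$ exactly $d$, which is needed to match the definition of $C_n$, and the use of the finiteness of divisors of $f_0$ to extract the subsequence.
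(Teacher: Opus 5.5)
Your proposal is correct and follows essentially the same route as the paper. You reduce to $f_0\neq 0$ via Lemma \ref{zero_coefficient}, use the finiteness of divisors of $f_0$ up to associates to fix the constant terms $d,e$ along a subsequence, and then verify the hypotheses of Lemma \ref{tree} through Proposition \ref{finite_union}, just as the paper does, only with the normalization of $a^{(n)}_0$ and the inclusion $I_{n,\ell}\subseteq I_n$ spelled out more explicitly.
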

\begin{proof}
Without loss of generality we may assume that $f_0 \neq 0$, by Lemma \ref{zero_coefficient}. Suppose that $f$ is not irreducible of finite height. That is, suppose that modulo every $n \geq 1$, there exists $a^{(n)},b^{(n)} \in R[[t]]$ with non-unit constant terms, such that $f \equiv_n a^{(n)}b^{(n)}$. In each such factorization, we have $a^{(n)}_0b^{(n)}_0 = f_0$. Since $f_0$ has finitely many factorizations up to units, we may replace $(a^{(n)},b^{(n)})_{n \geq 1}$ with a suitable subsequence and multiply with suitable units in $R[[t]]$, to assume that $d = a^{(n)}_0$ and $e = b^{(n)}_0$ are independent of $n$. Let $\ell \in \N$. For any $n \in \N$, let $I_n$ be the set of ideals of the form $(a,t^\ell)$, where $a \in R[[t]]$ is a factor of $f$ mod $t^n$ with $a_0 = d$. By Lemma \ref{finite_union}, there exists $M \in \N$ such that 
    $$\bigcup_{n \geq M} I_n$$ is finite. Then the conditions of Lemma \ref{tree} hold, and hence there exists a factorization $f = ab$ in $R[[t]]$, where $a_0 = d,b_0 = e$ are non-units, a contradiction. 

\end{proof}

We remark that Landweber proves \cite[\S3, (3.4)]{Landweber74} that for $S = R[x_1,x_2,\ldots]$, if $f \in S[[t]]$ is irreducible, then for sufficiently large $n$, a certain projection $\varphi_n(f)$ is irreducible in a graded subring $S_n[[t]]$. In the paper's concluding remark \cite[Remark 3.9]{Landweber74}, Landweber notes that one measure of the difficulty of the problem for the full ring lies in obtaining such an irreducibility result for $S[[t]]$ . This is precisely what Theorem \ref{finite_irreducibilty_theorem} here achieves, in the general context of Krull domains.

Theorem \ref{finite_irreducibility_section} has the following immediate implication:

\begin{corollary}\label{cor:finite_height_ufd_criterion}
    Let $R$ be a UFD. Then $R[[t]]$ is a UFD if and only if every irreducible power series of finite height in $R[[t]]$ is prime. 
\end{corollary}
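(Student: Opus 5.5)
The corollary follows directly from Theorem \ref{finite_irreducibility_section} together with the standard characterization of UFDs recalled in \S\ref{sec:preliminaries}: a domain is a UFD if and only if it is atomic and every irreducible element is prime. I would prove the two implications separately.

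For the forward direction, suppose $R[[t]]$ is a UFD. Then every irreducible element of $R[[t]]$ is prime. An irreducible power series of finite height is in particular irreducible; as noted after the definition of irreducibility height, a factorization $f=gh$ into non-units would give one modulo $t^k$ for every $k$. So such an element is prime.

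For the converse, suppose every irreducible power series of finite height in $R[[t]]$ is prime. I would check the two conditions of the characterization.

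\emph{Atomicity.} Since $R$ is a UFD, it is a Krull domain. Hence $R[[t]]$ is a Krull domain \cite[Chapter VII, \S1]{BourbakiCA}, and Krull domains are atomic.

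\emph{Irreducibles are prime.} Let $f\in R[[t]]$ be irreducible. By Theorem \ref{finite_irreducibility_section}, applied to the Krull domain $R$, $f$ is irreducible of finite height. By hypothesis $f$ is therefore prime.

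Thus $R[[t]]$ is atomic and all its irreducibles are prime, so it is a UFD. There is no real obstacle: all the substance sits in Theorem \ref{finite_irreducibility_section}, and the corollary only needs the observation that the UFD hypothesis on $R$ supplies both the Krull property used by that theorem and atomicity of $R[[t]]$.
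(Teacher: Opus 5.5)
Your proposal is correct and is exactly the argument the paper has in mind when it calls the corollary an immediate implication of Theorem~\ref{finite_irreducibility_section}. The forward direction uses that finite-height irreducibles are irreducible. The converse uses two facts: $R[[t]]$ is Krull, hence atomic, and every irreducible has finite height by the theorem.
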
 
Theorem \ref{finite_irreducibility_section} applies, in particular, for all Noetherian integrally closed domains, as these are all Krull domains. 

Let us also note that the theorem does not hold over arbitrary domains. For example, take a field $K$, consider the polynomial ring $$R = K[x_0,x_1,x_2,\ldots,y_0,y_1,\ldots]$$ where the $x_i$ and $y_j$ are countably many independent variables. Let $I$ be the ideal generated by $x_ny_n-x_{n-1}$ for all $n \geq 1$. One checks that $I$ is a prime ideal in $R$, and hence the quotient $S = R/I$ is a domain. Now consider the power series

$$f = x_0+x_1t+x_2t^2+\ldots \in S[[t]].$$
This element has a non-trivial factorization modulo every $t^n$:

$$f \equiv x_n\cdot\big(y_1y_2 \cdots  y_n+y_2 \cdots y_n t + \ldots +y_nt^{n-1}\big) \Mod{t^n},$$ however one verifies that $f$ itself is irreducible in $S[[t]]$. We omit the details, as this will not be needed in the sequel.

Finally, let us remark that the conclusion of Theorem \ref{finite_irreducibility_section} does not characterize Krull domains. For example, consider the polynomial ring $\F_q[x]$ over a finite field, and its subring $R= \F_q[x^2,x^{3}]$. Then every irreducible element in $R[[t]]$ is irreducible of finite height, by Proposition \ref{finite_quotient}. However, $R$ is not a Krull domain, since $R$ is not integrally closed. It would be of independent interest to characterize the domains $R$ for which the assertion of Theorem \ref{finite_irreducibility_section} holds. However, we doubt that there exists such a characterization purely in terms of the arithmetic of the ring $R$ itself. It is also unknown to the authors whether the assertion of Theorem \ref{finite_irreducibility_section} theorem holds for all Noetherian domains.

\section{A finite-stage criterion and Landweber's problem over regular UFDs}\label{sec:finite_means_prime_proof}

We now prove the finite-stage criterion stated in the introduction, and then deduce the polynomial-ring theorem over regular UFDs.

\begin{lemma}\label{retraction_units}
Let $S\subseteq R$ be domains and let $\rho:R\to S$ be a retraction, that is, $\rho|_S=\id_S$. If $s\in S$ is a unit in $R$, then $s$ is a unit in $S$. Consequently, a non-unit of $S[[t]]$ remains a non-unit in $R[[t]]$.
\end{lemma}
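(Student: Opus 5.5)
The first claim follows by applying $\rho$ to an inverse. Suppose $s\in S$ and $s\in R^\times$, so there is $r\in R$ with $sr=1$. Since $\rho$ is a ring homomorphism $R\to S$ that restricts to the identity on $S$, we have $1=\rho(1)=\rho(sr)=\rho(s)\rho(r)=s\,\rho(r)$. Here $\rho(r)\in S$, so $s\in S^\times$. (We use that the retraction is a unital ring map, which is implicit in the term \emph{retraction} of rings.)

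For the second claim we use the standard description of units in a power series ring. For any commutative ring $T$, an element $g=\sum g_it^i\in T[[t]]$ is a unit if and only if $g_0\in T^\times$. Indeed, if $g_0$ is a unit, the inverse is built recursively coefficient by coefficient. Conversely, from $gh=1$ we get $g_0h_0=1$.

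Now let $g\in S[[t]]$ be a non-unit of $S[[t]]$. Then $g_0\notin S^\times$. By the first part (in contrapositive form), $g_0\notin R^\times$. Hence $g$ is not a unit of $R[[t]]$.

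Alternatively, one could extend $\rho$ coefficientwise to a retraction $R[[t]]\to S[[t]]$ and apply the first part directly with $R[[t]]$ in place of $R$. No step is a real obstacle. The only point to check is that $\rho(1)=1$, which holds for a ring retraction, or follows from $1\in S$ and $\rho|_S=\id_S$.
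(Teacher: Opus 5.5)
Your proposal is correct and matches the paper's proof: apply $\rho$ to $sr=1$ to get $s\rho(r)=1$ with $\rho(r)\in S$, then use that a power series is a unit exactly when its constant term is a unit. Your added justification that $\rho(1)=1$, because $1\in S$ and $\rho|_S=\id_S$, is a harmless extra detail that the paper leaves implicit.
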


\begin{proof}
If $sr=1$ for some $r\in R$, then applying $\rho$ yields $s\rho(r)=1$, hence $s$ is a unit in $S$. The assertion for power series follows coefficient-wise, since a power series is a unit precisely when its constant term is a unit.
\end{proof}

\begin{theorem}\label{finite_stage_retract_criterion}
Let $R$ be a Krull domain. Suppose that for every finite subset $E\subset R$ there exist a subring $S\subseteq R$ and a retraction $\rho:R\to S$ such that $E\subseteq S$ and $S[[t]]$ is a UFD. Then $R[[t]]$ is a UFD.
\end{theorem}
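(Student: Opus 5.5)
Since $R$ is a Krull domain, $R[[t]]$ is a Krull domain and in particular atomic, so it suffices to show that every irreducible $f\in R[[t]]$ is prime. By Theorem \ref{finite_irreducibility_section}, such an $f$ is irreducible modulo $t^k$ for some $k$. We may assume $f_0\neq 0$: if $f_0=0$, then $f=t\cdot(t^{-1}f)$ with $t^{-1}f$ a unit, so $f$ is associate to $t$, which is prime. The strategy is to push the finitely many relevant coefficients into a subring $S$ with $S[[t]]$ a UFD, and then to transfer primality from $S[[t]]$ back to $R[[t]]$ using $\rho$. The obstacle is that $f$ has infinitely many coefficients, so $f$ itself need not lie in any such $S[[t]]$. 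We therefore work only with the truncation $\bar f=f_0+\dots+f_{N-1}t^{N-1}$ for a suitable $N$, and use the finite-height bounds.

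First, the unit transfer. Let $E$ contain the coefficients $f_0,\dots,f_{N-1}$, and let $\rho:R\to S$ be the retraction given by the hypothesis. The map $\rho$ extends coefficientwise to a retraction $R[[t]]\to S[[t]]$, and by Lemma \ref{retraction_units} non-units of $S[[t]]$ stay non-units in $R[[t]]$. Take $N\ge k$. Then $\bar f\in S[[t]]$ is irreducible modulo $t^k$ in $S[[t]]$, because a factorization into non-units over $S$ would also be one over $R$. Hence $\bar f$ is irreducible in $S[[t]]$, and so prime in the UFD $S[[t]]$.

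Next, we show $f$ is prime. Suppose $f\mid gh$ in $R[[t]]$, and fix $n$. We want $f\mid_{m} g$ or $f\mid_m h$ for some $m=m(n)\to\infty$; then Lemma \ref{lem_a|b}, applied along a subsequence on which the same factor wins, gives $f\mid g$ or $f\mid h$. To get this, enlarge $E$ so that it contains the first $n$ coefficients of $f,g,h$ and of the quotient $q$ with $gh\equiv_n fq$. Apply $\rho$ to obtain $\bar f\mid_n \bar g\bar h$ in $S[[t]]$. The difficulty is converting divisibility modulo $t^n$ in $S[[t]]$ into genuine divisibility. The natural fix is to note that $\bar f\bar q-\bar g\bar h$ is divisible by $t^n$ in $S[[t]]$, so $\bar f$ divides $\bar g\bar h+t^n w$ for some $w\in S[[t]]$. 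This is not a clean product, however. The better route is to prove that primes of finite height in $S[[t]]$ satisfy a $C$-prime-type bound, i.e. that divisibility modulo $t^n$ of a product forces divisibility modulo $t^{n'}$ of one factor, with $n'$ depending only on $n$, $f_0$ and $k$ and not on $S$.

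This uniformity is the main obstacle. My first attempt would be to localize at the finitely many height-1 primes of $R$ containing $f_0$. In the DVRs $R_\frp$, Propositions \ref{quasi-prime} and \ref{quasi_divides}, together with the finiteness statements Lemma \ref{local_bound} and Proposition \ref{finite_union}, give control depending only on $v_\frp(f_0)$ and on the height. Lemma \ref{V_to_R} then glues the local ideals $(a,t^\ell)$. A cleaner alternative that I would try alongside it is a tree argument as in Lemma \ref{tree}. For each $n$, primality of $\bar f$ in $S_n[[t]]$ yields true divisibility $\bar f\mid \bar g$ or $\bar f\mid\bar h$ in $S_n[[t]]$, after choosing $S_n$ to contain the exact data of the equation $\bar f\bar q=\bar g\bar h+t^n w$, truncated suitably. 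Mapping back gives congruence factorizations over $R$, and the finiteness of the ideals $(a,t^\ell)$ from Proposition \ref{finite_union}, applied to $gh$ and its divisors with fixed constant term, forces compatibility as $n\to\infty$. König's lemma then produces $f\mid g$ or $f\mid h$.
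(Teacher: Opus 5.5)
There is a genuine gap. Your proposal never actually establishes $f|_N g$ or $f|_N h$ for unbounded $N$, and none of the mechanisms you offer for this step works.

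\begin{itemize}
\item Primality of $\bar f$ in $S_n[[t]]$ says nothing about a relation $\bar f\bar q=\bar g\bar h+t^nw$, because the right-hand side is not a product. Your tree/K\"onig variant rests on exactly this step.
\item The uniform ``$C$-prime-type'' bound over $S$ is not proved. In its linear form it is false in general: by the remark at the end of Section~\ref{sec:quasi_primality}, over $\mathbb{Q}[X,Y]$ (whose power series ring is a UFD) the element $p^2-Yt^2$ is irreducible modulo $t^3$ but not $C$-prime for any $C$.
\item The localization route uses only height-1 data and Lemma~\ref{V_to_R}, never the retractions. A bound derived from that alone would hold over every Krull domain. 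Via Lemma~\ref{lem_a|b} it would make every finite-height irreducible prime, hence $R[[t]]$ a UFD for every UFD $R$, contradicting Samuel's example.
\item Moreover, $f$ need not be irreducible of finite height in $R_{\frp}[[t]]$, so Proposition~\ref{quasi-prime} does not even apply to it there.
\end{itemize}

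The missing idea is that the obstacle you name is illusory: you never need $f$ itself, or a truncation of it, to lie in $S[[t]]$. You already note that $\rho$ extends coefficientwise to a ring homomorphism $R[[t]]\to S[[t]]$.

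Apply it to the \emph{exact} identity $gh=fq$ to get $\rho(g)\rho(h)=\rho(f)\rho(q)$ in $S[[t]]$. If $E$ contains the coefficients of degree $<N$ of $f,g,h,q$, with $N\ge k$, then $\rho(x)\equiv_N x$ for each of these four series.

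Your irreducibility argument for $\bar f$ applies verbatim to $\rho(f)$. It only uses $\rho(f)\equiv_k f$, that $f_0\neq 0$ is a non-unit, and Lemma~\ref{retraction_units}. So $\rho(f)$ is irreducible, hence prime, in the UFD $S[[t]]$.

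Thus, say, $\rho(g)=\rho(f)Y$ with $Y\in S[[t]]$. Then $g\equiv_N \rho(g)=\rho(f)Y\equiv_N fY$, so $f|_N g$; symmetrically one may get $f|_N h$ instead. This holds for every $N\ge k$, so one alternative occurs for infinitely many $N$, hence for all $N$. Lemma~\ref{lem_a|b} then gives $f| g$ or $f| h$.

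This is exactly the paper's argument. The paper phrases it by contradiction, with fixed $m_g,m_h$ such that $g\notin(f,t^{m_g})$ and $h\notin(f,t^{m_h})$. All the real difficulty lives in Theorem~\ref{finite_irreducibility_section}; no $C$-primality, localization, or K\"onig's lemma is needed at this stage.
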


\begin{proof}
Since $R$ is a Krull domain, $R[[t]]$ is a Krull domain, and hence is atomic. It remains to show that every irreducible element of $R[[t]]$ is prime.

Let $f\in R[[t]]$ be irreducible. If $f_0=0$, then by Lemma \ref{zero_coefficient} and the equality $f=t(t^{-1}f)$, the element $t^{-1}f$ is a unit; hence $f$ is associate to $t$, and is prime. Thus we may assume $f_0\neq 0$.

By Theorem \ref{finite_irreducibility_section}, the element $f$ is irreducible of finite height. Choose $m_f\ge 1$ such that $f$ is irreducible modulo $t^{m_f}$. Suppose, toward a contradiction, that $f$ is not prime. Then there exist $g,h,q\in R[[t]]$ such that
$$
fq=gh,
$$
but $f\nmid g$ and $f\nmid h$. By Lemma \ref{lem_a|b}, there exist integers $m_g,m_h\ge 1$ such that
$$
g\notin (f,t^{m_g})\quad\text{and}\quad h\notin (f,t^{m_h}).
$$
Put $m=\max\{m_f,m_g,m_h\}+1$.

Let $E\subset R$ be the finite set consisting of the coefficients of $f,g,h,q$ of degrees $<m$. By assumption, there are a subring $S\subseteq R$ and a retraction $\rho:R\to S$ with $E\subseteq S$ and $S[[t]]$ a UFD. Extend $\rho$ coefficientwise to a retraction $R[[t]]\to S[[t]]$. Set
$$
F=\rho(f),\qquad G=\rho(g),\qquad H=\rho(h),\qquad Q=\rho(q).
$$
Since $\rho$ fixes the coefficients in $E$, we have
$$
F\equiv f,
\quad G\equiv g,
\quad H\equiv h,
\quad Q\equiv q \Mod{t^m}.
$$
Applying $\rho$ to $fq=gh$ gives
$$
FQ=GH
$$
in $S[[t]]$.

We claim that $F$ is irreducible in $S[[t]]$. Indeed, suppose $F=AB$ with $A,B\in S[[t]]$ non-units. By Lemma \ref{retraction_units}, $A$ and $B$ are also non-units in $R[[t]]$. Reducing modulo $t^m$, and using $F\equiv f\Mod{t^m}$, we obtain a non-trivial factorization of $f$ modulo $t^m$ in $R[[t]]$, contradicting the choice of $m\ge m_f$.

Thus $F$ is irreducible in the UFD $S[[t]]$, hence $F$ is prime. From $FQ=GH$, we may assume, without loss of generality, that $F$ divides $G$ in $S[[t]]$. Choose $Y\in S[[t]]$ with $G=FY$. Since $F\equiv f$ and $G\equiv g$ modulo $t^m$, we have
$$
g\equiv fY \Mod{t^m}
$$
in $R[[t]]$. Hence $g\in (f,t^m)$, contradicting the choice of $m>m_g$. The case $F\mid H$ similarly contradicts the choice of $m>m_h$. Therefore $f$ is prime, and $R[[t]]$ is a UFD.
\end{proof}

\begin{theorem}\label{regular_UFD_polynomial_theorem}
Let $A$ be a regular UFD, let $I$ be any set, and put
$$
R=A[x_i\mid i\in I].
$$
Then $R[[t]]$ is a UFD.
\end{theorem}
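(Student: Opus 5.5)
The plan is to deduce the statement directly from Theorem \ref{finite_stage_retract_criterion}. Two things need checking: that $R=A[x_i\mid i\in I]$ is a Krull domain, and that the finite-stage retraction hypothesis holds.

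\textbf{$R$ is Krull.} Since $A$ is a UFD, $R$ is a UFD. Indeed, any two elements of $R$ lie in a polynomial ring over $A$ in finitely many variables, which is a UFD by Gauss's lemma. Moreover, a polynomial ring $A[x_j\mid j\in J]$ is a retract of $R$ via $x_i\mapsto 0$ for $i\notin J$, so irreducibles and primes can be checked in finitely many variables. Atomicity also holds, because a factorization of $p\in A[x_j\mid j\in J]$ inside $R$ only involves the variables in $J$, by degree considerations. Every UFD is a Krull domain, as recalled in \S\ref{sec:krull_proof}, so $R$ is Krull.

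\textbf{The retraction hypothesis.} Let $E\subset R$ be finite. Only finitely many variables occur in the elements of $E$; let $J\subseteq I$ be this finite set and put $S=A[x_j\mid j\in J]$, so that $E\subseteq S$. Define $\rho:R\to S$ as the $A$-algebra homomorphism fixing $x_j$ for $j\in J$ and sending $x_i\mapsto 0$ for $i\notin J$. It is a ring homomorphism restricting to the identity on $S$, so it is a retraction.

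\textbf{$S[[t]]$ is a UFD.} We need $S$ to be a regular UFD and then apply the Samuel--Buchsbaum theorem \cite[Theorem 2.1]{Samuel61}, \cite[Theorem 3.2]{Buchsbaum61}. The UFD property follows from Gauss's lemma by induction on $|J|$. Noetherianity follows from Hilbert's basis theorem. Regularity uses the standard fact that if $A$ is regular Noetherian then so is $A[x]$ (e.g.\ Matsumura, Theorem 19.5, via Serre's homological characterization of regular local rings), applied by induction on $|J|$.

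Finally, Theorem \ref{finite_stage_retract_criterion} yields that $R[[t]]$ is a UFD. The only non-formal input is the regularity of polynomial rings over regular rings, which is classical. So I expect no real obstacle here: the heavy lifting was done in Theorems \ref{finite_irreducibility_section} and \ref{finite_stage_retract_criterion}.
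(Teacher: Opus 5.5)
Your proposal is correct and matches the paper's proof: $R$ is a UFD and hence Krull, the finite polynomial subring $S$ on the variables occurring in $E$ is a regular UFD so $S[[t]]$ is a UFD by Samuel--Buchsbaum, and sending the remaining variables to $0$ gives the retraction required by Theorem \ref{finite_stage_retract_criterion}. You also justify that $S$ is regular, via regularity of polynomial rings over regular rings, whereas the paper simply asserts it.
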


\begin{proof}
First note that $R$ is a UFD. Indeed, every nonzero element of $R$ belongs to a finite polynomial ring over $A$. If such an element admits a factorization in $R$, then all factors involved lie in a larger finite polynomial ring over $A$, where unique factorization holds. Thus the usual finite-variable unique factorizations are also the unique factorizations in $R$. In particular, $R$ is a Krull domain.

Let $E\subset R$ be finite. Only finitely many variables occur in the elements of $E$; let $J\subset I$ be a finite set containing all of them, and put $S=A[x_j\mid j\in J]$. Then $S$ is a regular UFD. By the theorem of Samuel and Buchsbaum \cite[Theorem 2.1]{Samuel61}, \cite[Theorem 3.2]{Buchsbaum61}, the power series ring $S[[t]]$ is a UFD.

Define $\rho:R\to S$ to be the $A$-algebra homomorphism which fixes $x_j$ for $j\in J$ and sends $x_i$ to $0$ for $i\notin J$. Then $\rho$ is a retraction and $E\subseteq S$. Theorem \ref{finite_stage_retract_criterion} applies and gives that $R[[t]]$ is a UFD.
\end{proof}

Taking $I=\mathbb N$ in theorem \ref{regular_UFD_polynomial_theorem}, we obtain the solution to Landweber's problem. Let us note that in the special case where $A = K$ is a field extension of $\Q$, one can get a solution along the lines presented in this section, building upon the special case of Theorem \ref{finite_irreducibilty_theorem} proven by Bayart in \cite[\S IV]{Bayart81}. However, the assumption that all integers are units seems critical to Bayart's proof -- the general version proven here covers the case where $A = K$ is a field of positive characteristic, and for example the cases where $A = \mathcal{O}[y_1,\ldots,y_d]$ is a polynomial ring over any Dedekind domain.

The above results present progress with the basic question raised by Krull -- when is the power series ring $R[[t]]$ a UFD? However, the following natural question is left open: If $R$ is a domain such that $S = R[[x]]$ is a UFD, is $S[[t]]$ a UFD? This question was raised by Bayart \cite{Bayart73}, and is highlighted by Gilmer in \cite{Gilmer06Questions}; it remains unresolved.

\bibliographystyle{numeric}

\end{document}